\newcommand{\mb}[1]{\mathbb{#1}}
\newcommand{\mc}[1]{\mathcal{#1}}
\newcommand{\supp}{\operatorname{supp}} 
\newtheorem{theorem}{Theorem}
\newtheorem{lemma}{{Lemma}}
\newtheorem{prop}{{Proposition}}
\newtheorem{propi}{{Property}}
\newtheorem{definition}{Definition}
\newtheorem{cond}{Condition}
\newtheorem{rmq}{\textit{Remark}}
\title{\LARGE \bf
Controllability and optimal control of the transport equation with a localized vector field*
}
\author{Michel Duprez$^{1}$ and Morgan Morancey$^{2}$ and Francesco Rossi$^{3}$
\thanks{*This work was supported by Archim\`ede Labex (ANR-11-LABX-0033) and of the A*MIDEX project (ANR-11-IDEX-0001-02), funded by the ``Investissements d'Avenir" French Government programme managed by the French National Research Agency (ANR). The second and third authors acknowledge the support of the ANR project CroCo ANR-16-CE33-0008.}
\thanks{$^{1}$Aix Marseille Universit\'{e}, CNRS, Centrale Marseille, I2M, LSIS,  Marseille, France.
        {\tt\small mduprez@math.cnrs.fr}}%
\thanks{$^{2}$Aix Marseille Universit\'{e}, CNRS, Centrale Marseille, I2M, Marseille, France.
        {\tt\small morgan.morancey@univ-amu.fr}
        }%
\thanks{$^{3}$Aix Marseille Universit\'{e}, CNRS, ENSAM, Universit\'{e}\ de Toulon, LSIS, Marseille, France.	 
        {\tt\small francesco.rossi@lsis.org}
        }%
}
\begin{document}

\maketitle
\thispagestyle{empty}
\pagestyle{empty}

\begin{abstract}
We study controllability of a Partial Differential Equation of transport type, that arises in crowd models. We are interested in controlling such system with a control being a Lipschitz vector field on a fixed control set $\omega$.

We prove that, for each initial and final configuration, one can steer one to another with such class of controls only if the uncontrolled dynamics allows to cross the control set $\omega$.

We also prove a minimal time result for such systems. We show that the minimal time to steer one initial configuration to another is related to the condition of having enough mass in $\omega$ to feed the desired final configuration.
\end{abstract}

\section{INTRODUCTION}

\quad In recent years, the study of systems describing a crowd of interacting autonomous agents 
has draw a great interest from the control community (see e.g. the Cucker-Smale model \cite{CS}).
A better understanding of such interaction phenomena can have a strong impact in several key applications, such as road traffic and egress problems for pedestrians. Beside the description of interaction, it is now relevant to study problems of {\bf control of crowds}, i.e. of controlling such systems by acting on few agents, or on the crowd localized in a small subset of the configuration space.

Two main classes are widely used to model crowds of interacting agents. In {\bf microscopic models}, the position of each agent is clearly identified; the crowd dynamics is described by a large dimensional ordinary differential equation, in which couplings of terms represent interactions. In {\bf macroscopic models}, instead, the idea is to represent the crowd by the spatial density of agents; in this setting, the evolution of the density solves a partial differential equation of transport type. This is an example of a {\bf distributed parameter system}. Some nonlocal terms can model the interactions between the agents. In this article, we focus on this second approach.

To our knowledge, there exist few studies of control of this kind of equations.
In \cite{PRT15}, the authors provide approximate alignment of a crowd described by the Cucker-Smale model \cite{CS}. The control is the acceleration, and it is localized in a control region $\omega$ which moves in time. In a similar situation, a stabilization strategy has been established in  \cite{CPRT17}, by generalizing the Jurdjevic-Quinn method to distributed parameter systems.

%
%

In this article, we study a partial differential equation of transport type, that is widely used for modeling of crowds. Let $\omega$ be a nonempty open connected subset 
of $\mb{R}^d$ ($d\geq1$), being the portion of the space on which the control is allowed to act.
Let $v: \mb{R}^d\rightarrow\mb{R}^d$ be a vector field assumed Lipschitz and uniformly bounded.
Consider the following  linear transport equation 
\vspace*{-1mm}\begin{equation}\label{eq:transport}
	\left\{
	\begin{array}{ll}
\partial_t\mu +\nabla\cdot((v+\mathds{1}_{\omega}u)\mu)=0&\mbox{ in }\mb{R}^d\times\mb{R}^+,\\\noalign{\smallskip}
\mu(\cdot,0)=\mu^0&\mbox{ in }\mb{R}^d,\\
	\end{array}
	\right.\vspace*{-1mm}
\end{equation}
where $\mu(t)$ is the time-evolving measure representing the crowd density and $\mu^0$ is the initial data. 
The control is the function $\mathds{1}_{\omega}u:\mb{R}^d\times\mb{R}^+\rightarrow\mb{R}^d$.
The function $v+\mathds{1}_{\omega}u$ represents the velocity field acting on $\mu$.
System \eqref{eq:transport} is a first approximation for crowd modeling, since the uncontrolled vector field $v$ is given, and it does not describe interactions between agents. Nevertheless, it is necessary to understand controllability properties for such simple equation. Indeed, the results contained in this article will be instrumental to a forthcoming paper, where we will study more complex crowd models, with a non-local term $v[\mu]$.

We now recall the precise notion of approximate controllability  for System \eqref{eq:transport}. 
We say that
System \eqref{eq:transport} is 
{\it approximately controllable}
from $\mu^0$ to $\mu^1$ on the time interval $(0,T)$ if for each $\varepsilon>0$ there exists $\mathds{1}_{\omega}u$ 
such that the corresponding 
solutions to System \eqref{eq:transport} satisfies $W_p(\mu(T),\mu^1)\leqslant \varepsilon$.
%
The definition of the Wasserstein distance $W_p$ is recalled in Section \ref{section 2}.

To control System \eqref{eq:transport},  from a geometrical point of view, the uncontrolled vector field $v$ needs to send the support of $\mu^0$ to $\omega$ forward in time and the support of $\mu^1$ to $\omega$ backward in time. 
This idea is formulated in the following Condition:

\begin{cond}[Geometrical condition]\label{cond1}
Let $\mu^0,\mu^1$ be two probability measures on $\mb{R}^d$ satisfying:
\begin{enumerate}
\item[(i)] For all $x^0\in \supp(\mu^0)$, 
there exists $t^0>0$ such that $\Phi_{t^0}^v(x^0)\in \omega,$
where $\Phi_{t}^v$ is the \textit{flow} associated to $v$, \textit{i.e.} the solution to the Cauchy problem 
\vspace*{-1mm}\begin{equation*}
\left\{\begin{array}{l}
\dot x(t) =v(x(t))\mbox{ for a.e. }t>0,\\\noalign{\smallskip}
x(0)=x^0.
\end{array}\right.\vspace*{-1mm}
\end{equation*}

\item[(ii)] For all $x^1\in \supp(\mu^1)$, 
there exists $t^1>0$ such that $\Phi_{-t^1}^{v}(x^1)\in \omega$.
\end{enumerate}
\end{cond}

\begin{rmq}
Condition \ref{cond1} is the minimal one that we can expect to steer any initial condition to 
any targets.
Indeed, if the first item of Condition \ref{cond1} is not satisfied, there exists a whole sub-population of the measure $\mu_0$ that never intersects the control region, thus, we cannot act on it.
\end{rmq}

%

We denote by $\mc{U}$ the set of admissible controls, that are functions $\mathds{1}_{\omega}u:\mb{R}^d\times\mb{R}^+\rightarrow\mb{R}^d$ Lipschitz in space, 
measurable in time and uniformly bounded. 
 If we impose the classical Carath\'eodory condition of $\mathds{1}_{\omega}u$ being in $\mc{U}$, then the flow 
$\Phi^{v+\mathds{1}_{\omega}u}_t$ is an homeomorphism (see \cite[Th. 2.1.1]{BP07}).
As a result, one cannot expect  exact 
controllability, since for general measures there exists no homeomorphism sending one to another.
We then have the following result of approximate controllability.

\begin{theorem}\label{th1}
Let  $\mu^0,\mu^1$ be two probability measures on $\mb{R}^d$ 
compactly supported absolutely continuous with respect to the Lebesgue measure and satisfying Condition \ref{cond1}. 
Then there exists $T>0$ 
such that System \eqref{eq:transport} is approximately controllable at time $T$
from $\mu^0$ to $\mu^1$
with a control $\mathds{1}_{\omega}u$ in $\mc{U}$.
\end{theorem}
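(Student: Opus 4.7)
The plan is to steer $\mu^0$ within $\varepsilon$ of $\mu^1$ in $W_p$ by processing the mass through $\omega$ in finitely many small chunks, using the uncontrolled flow $\Phi^v$ outside $\omega$ and designing the control $u$ only inside $\omega$.

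First, by compactness of $\supp(\mu^0)$ and $\supp(\mu^1)$, continuity of the map $x\mapsto\Phi^v_t(x)$, and Condition \ref{cond1}, there exists $T^\star>0$ such that every $x^0\in\supp(\mu^0)$ enters $\omega$ by time $T^\star$ under the forward flow, and every $x^1\in\supp(\mu^1)$ enters $\omega$ by time $T^\star$ under the backward flow. Since $\mu^0$ and $\mu^1$ are absolutely continuous and compactly supported, I would then approximate them in $W_p$ by convex sums $\sum_{i=1}^N \alpha_i \mu^0_i$ and $\sum_{i=1}^N \alpha_i \mu^1_i$ with matching weights, each component supported in a ball of radius $\delta$. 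This reduces the problem to steering each chunk $\mu^0_i$ close, in $W_p$, to the corresponding $\mu^1_i$, up to the quantifiable error $O(\delta)$.

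The core of the argument is a time-staggered construction of $u$ on $\omega$. Partition $[0,T]$ into processing windows $[\tau_i,\tau_{i+1}]$. Chunk $\mu^0_i$ is: first carried by $\Phi^v$ until it enters $\omega$; then held there by the local prescription $u=-v$ on a small neighbourhood (freezing its trajectory) until its window begins; during $[\tau_i,\tau_{i+1}]$, a smooth local vector field transports it onto the backward $\Phi^v$-trajectory issued from a point near $\supp(\mu^1_i)$, which by Condition \ref{cond1}(ii) meets $\omega$; finally, it exits $\omega$ and is carried by $\Phi^v$ so that at time $T$ it lies within $O(\delta)$ of $\mu^1_i$. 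Because the chunks are $\delta$-small, are processed in disjoint windows, and occupy pairwise disjoint neighbourhoods inside $\omega$ at every instant, the local prescriptions (transport, freeze, zero) can be glued by smooth spatial cutoffs into a single $u$ that is Lipschitz in space, measurable in time, and uniformly bounded, hence in $\mc{U}$.

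Combining Lipschitz continuity of $\Phi^v$ with the chunk design yields $W_p(\mu(T),\mu^1)\leqslant C\delta$ for some $C$ independent of $\delta$, so letting $\delta\to 0$ (while enlarging $T$ accordingly) gives approximate controllability. The principal obstacle is the third step: producing a single $\mc{U}$-admissible vector field that performs distinct local actions on different chunks simultaneously. This is what forces both the time-staggering (so only one chunk is active at a time, the others being frozen) and the small-chunk decomposition (so the chunk supports remain spatially disjoint in $\omega$), allowing cutoff interpolations to produce the required globally Lipschitz $u$.
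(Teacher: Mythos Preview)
Your strategy is genuinely different from the paper's. The paper does \emph{not} process the mass chunk by chunk through $\omega$. Instead it (i) uses a single control $u=(\theta_k-1)v$, with $\theta_k$ a cutoff vanishing on a compact $\omega_0\subset\subset\omega$, to bring \emph{all} of $\mu^0$ into $\omega$ at once (Proposition~\ref{prop1}); (ii) pushes everything into a fixed hypercube $S\subset\omega$ via a gradient field $k\nabla\eta$ (Proposition~\ref{prop2}); (iii) does the same backward in time for $\mu^1$; and (iv) performs the approximate rearrangement entirely inside $S$ by an explicit grid construction sending cells $A_{ij}$ of mass $1/n^2$ to cells $B_{ij}$ of the same mass (Proposition~\ref{prop dim=d}). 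This three-phase scheme avoids exactly the difficulty your sketch leaves open.

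That difficulty is a genuine gap. You assert that the chunks ``occupy pairwise disjoint neighbourhoods inside $\omega$ at every instant'', but this is neither proved nor automatic. The flow $\Phi^{v+\mathds{1}_\omega u}_t$ is a homeomorphism, so chunks with disjoint initial supports stay disjoint \emph{as sets}; but nothing prevents two chunks from entering $\omega$ (at different times, along the uncontrolled flow) through the same point of $\partial\omega$, nor the incoming trajectory of a later chunk from crossing the frozen location of an earlier one. At such a point no Lipschitz $u$ can simultaneously prescribe $u=-v$ (freeze) and a transport direction. A repair would require an extra mechanism: after each chunk enters $\omega$, route it along a controlled path, avoiding previously parked chunks, to a preassigned well-separated parking spot $p_i\in\omega$ away from $\partial\omega$; only then can the cutoff gluing be carried out. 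This is plausible but is real work, and without it your construction of a single $\mc{U}$-admissible field does not go through. Note also that $\mathds{1}_\omega u\in\mc{U}$ forces $u\to 0$ at $\partial\omega$, so a chunk cannot be frozen the instant it crosses into $\omega$; it must first drift into the interior, which further complicates the collision-avoidance argument.
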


The proof of this result will be given in Section \ref{sec:lip}.
After having proven approximate controllability for System \eqref{eq:transport}, we aim to study the minimal time problem, i.e. the minimal time to send $\mu_0$ to $\mu_1$. We have the following result.

\begin{theorem}\label{th2}
Let  $\mu^0,~\mu^1$ be two probability measures, with compact support, absolutely continuous with respect to the Lebesgue measure and satisfying Condition \ref{cond1}. 

We say that $T^*$ is an admissible time if it satisfies 
\begin{enumerate}
\item[(a)] For each $x^0\in\supp(\mu^0)$
\vspace*{-2mm}$$T^*\geqslant\inf\{t\in\mb{R}^+:\Phi_t^v(x^0)\in \omega\}.\vspace*{-2mm}$$
\item[(b)] For each $x^1\in\supp(\mu^1)$
\vspace*{-2mm}$$T^*\geqslant\inf\{t\in\mb{R}^+:\Phi_{-t}^v(x^1)\in \omega\}.\vspace*{-2mm}$$
\item[(c)]There exists a sequence $(u_k)_k$ of $\mc{C}^{\infty}$-functions equal to $0$ in $\omega^c$
such that
\vspace*{-1mm}\begin{equation}
\lim\limits_{k\rightarrow\infty}[\Phi_t^{v+u_k}\#\mu^0](\omega)
\geqslant 1-\lim\limits_{k\rightarrow\infty}[\Phi_{t-T^*}^{v+u_k}\#\mu^1](\omega).\label{cond temps opt}
\vspace*{-1mm}\end{equation}
\end{enumerate}

Let $T_0$ be the infimum of such $T^*$. Then, for all $T>T_0$, System \eqref{eq:transport} is approximately controllable from $\mu^0$ to $\mu^1$ at time $T$.
\end{theorem}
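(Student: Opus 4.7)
The plan is to fix $T>T_0$ and choose an admissible $T^*$ with $T_0\leqslant T^*<T$, so that conditions (a), (b), (c) all hold for this $T^*$. Approximate controllability at time $T$ then reduces to approximate controllability at $T^*$: the remaining slack $T-T^*$ can be consumed by a preliminary idle phase with a control chosen so that the support of $\mu^0$ is undisturbed outside a set on which the flow trivially returns to its initial state, or by concatenating a control that merely transports mass along $v$. Throughout, I exploit the absolute continuity of $\mu^0$ and $\mu^1$ with respect to Lebesgue measure to avoid atoms when rearranging mass.

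Geometrically, condition (a) associates to each $x^0\in\supp(\mu^0)$ a first entry time $\tau^0(x^0)=\inf\{t\geqslant0:\Phi_t^v(x^0)\in\omega\}\leqslant T^*$, and symmetrically condition (b) gives a first backward entry time $\tau^1(x^1)\leqslant T^*$ for each $x^1\in\supp(\mu^1)$. The push-forward $\Phi_t^v\#\mu^0$ delivers mass into $\omega$ at a rate dictated by the law of $\tau^0$ under $\mu^0$; the push-forward $\Phi_{t-T^*}^v\#\mu^1$ extracts the mass of $\mu^1$ still to be ``supplied'' from $\omega$ at time $t$. Condition (c) is exactly the statement that, along some smooth near-optimal sequence $(u_k)_k$ supported in $\omega$, at each time $t\in[0,T^*]$ the fraction of $\mu^0$ already pushed into $\omega$ is at least the fraction of $\mu^1$ not yet retrievable from $\omega$.

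The construction then proceeds in three coupled stages. First, run the smooth vector field $v+u_k$ from condition (c) so that mass of $\mu^0$ enters $\omega$ in accordance with the quantitative bound in \eqref{cond temps opt}. Second, once inside $\omega$, use the Lipschitz flexibility of the control (invoking Theorem \ref{th1} on the absolutely continuous restricted marginals in $\omega$) to rearrange each incoming parcel onto a point of $\omega$ from which the free flow of $v$ leads, at time $T^*$, to the prescribed piece of $\supp(\mu^1)$. Third, switch the control off and let $v$ alone carry the parcel to its target. The mass balance expressed by condition (c) ensures that at every instant the demand from the $\mu^1$-side does not exceed the supply from the $\mu^0$-side, so the rearrangement step is always feasible. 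Taking $k\to\infty$ and using Wasserstein stability of the transport equation with respect to the driving field then yields $W_p(\mu(T^*),\mu^1)\leqslant\varepsilon$.

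The main obstacle will be assembling these three stages into a \emph{single} admissible control $\mathds{1}_\omega u\in\mc{U}$ acting on a continuum of mass parcels indexed by their entry and exit times; the matching inequality in (c) encodes when this is possible in principle, but realizing it as a Lipschitz-in-space, measurable-in-time field requires care. The natural route is to fix $\varepsilon>0$, partition $[0,T^*]$ into finitely many short intervals, approximately balance the arriving and departing subprobabilities on each interval by Theorem \ref{th1}, and then control the cumulative error through the uniform Lipschitz and boundedness assumptions on $v$. Letting first $\varepsilon\to 0$ and then $k\to\infty$ gives approximate controllability at time $T^*$, hence at time $T$.
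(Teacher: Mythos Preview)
Your proposal follows essentially the same strategy as the paper: discretize $[0,T^*]$ into finitely many subintervals, use the storage controls $u_k$ from condition (c) to accumulate the incoming mass of $\mu^0$ in $\omega$, and on each subinterval invoke the rearrangement result of Theorem~\ref{th1} to match the arriving $\mu^0$-mass with the departing $\mu^1$-mass, the feasibility of which is exactly the balance inequality \eqref{cond temps opt}. The only place where your sketch deviates is the handling of the slack $T-T^*$: your ``idle phase'' description is not quite right (running $v$ alone shifts entry/exit times and need not preserve (a)--(c)), whereas the paper absorbs the slack \emph{inside} the construction by allotting a small positive duration to each internal rearrangement step---but this is a cosmetic fix, not a structural difference.
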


The proof of this Theorem is given in Section \ref{s-proof2}.

\begin{rmq}
The meaning of condition \eqref{cond temps opt} is the following: functions $u_k$ are used to store the mass in $\omega$. Thus, condition \eqref{cond temps opt} means that at each time $t$ there is more mass that has entered $\omega$ that mass that has exited. This is the minimal condition that we can expect in this setting, since control can only move masses, without creating them.
\end{rmq}

This paper is organized as follows. 
In Section \ref{section 2}, we recall some properties of the continuity equation and the Wasserstein distance.
Sections \ref{sec:lip} and \ref{s-proof2} are devoted to prove Theorems \ref{th1} and \ref{th2}, respectively.
We conclude with some numerical examples in Section \ref{s-example}.


\section{The continuity equation and the Wasserstein distance}\label{section 2}

In this section, we recall some properties of the continuity equation \eqref{eq:transport} and of the Wasserstein distance, which will be used all along this paper.

We denote by $\mc{P}_c(\mb{R}^d)$ the space of probability measures in $\mb{R}^d$ with compact support, and by $\mc{P}_c^{ac}(\mb{R}^d)$  the subset  of $\mc{P}_c(\mb{R}^d)$ of measures which are  absolutely continuous with respect to the Lebesgue measure. 
First of all, we give the definition of the push-forward of a measure and of the Wasserstein distance.
\begin{definition}
Denote by $\Gamma$  the set of the Borel maps $\gamma:\mb{R}^d\rightarrow\mb{R}^d$.
For a $\gamma\in\Gamma$, 
we define the push-forward $\gamma\#\mu$ of a measure $\mu$ of $\mb{R}^d$ as follows:
\vspace*{-1mm}\begin{equation*}
(\gamma\#\mu)(E):=\mu(\gamma^{-1}(E)),
\vspace*{-1mm}\end{equation*}
for every subset $E$ such that $\gamma^{-1}(E)$ is $\mu$-measurable.
\end{definition}

%

\begin{definition}
Let $p\in[1,\infty)$ and $\mu,\nu\in \mc{P}^{ac}_c(\mb{R}^d)$. Define 
\vspace*{-1mm}\begin{equation}\label{def:Wp}
W_p(\mu,\nu)=\inf\limits_{\gamma\in\Gamma}\left\{\left(\displaystyle\int_{\mb{R}^d}
|\gamma(x)-x|^pd\mu\right)^{1/p}:\gamma\#\mu=\nu\right\}.
\vspace*{-1mm}\end{equation}
\begin{prop}
$W_p$ is a distance on $\mc{P}^{ac}_c(\mb{R}^d)$, called the {\bf Wasserstein distance}. 
\end{prop}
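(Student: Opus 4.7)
The plan is to verify the four axioms of a metric (finiteness, non-negativity together with identity of indiscernibles, symmetry, and the triangle inequality) on $\mathcal{P}_c^{ac}(\mathbb{R}^d)$. Non-negativity is immediate from the definition. Finiteness follows because $\mu,\nu$ are compactly supported and absolutely continuous, so by a classical optimal transport result (Brenier's theorem for $p=2$, Gangbo--McCann for general $p>1$) there exists at least one Borel map $\gamma$ with $\gamma\#\mu=\nu$; since the supports are bounded, $|\gamma(x)-x|^p$ is essentially bounded on $\supp(\mu)$, so the infimum is finite.

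For the identity of indiscernibles, if $\mu=\nu$ then $\gamma=\mathrm{id}$ is admissible and gives cost $0$. Conversely, if $W_p(\mu,\nu)=0$, there exists a sequence $(\gamma_n)$ of admissible maps with $\gamma_n\to \mathrm{id}$ in $L^p(\mu)$; passing to weak convergence of push-forwards gives $\nu=\lim_n \gamma_n\#\mu=\mathrm{id}\#\mu=\mu$. The triangle inequality is proved by composition: given $\mu,\sigma,\nu$ and $\varepsilon>0$, pick near-optimal maps $\gamma_1\#\mu=\sigma$ and $\gamma_2\#\sigma=\nu$. Then $(\gamma_2\circ\gamma_1)\#\mu=\nu$, and by Minkowski's inequality in $L^p(\mu)$ together with the change of variables $\int_{\mathbb{R}^d}|\gamma_2(\gamma_1(x))-\gamma_1(x)|^p\,d\mu=\int_{\mathbb{R}^d}|\gamma_2(y)-y|^p\,d\sigma$, one obtains
\begin{equation*}
\Bigl(\int |\gamma_2\circ\gamma_1-x|^p\,d\mu\Bigr)^{1/p} \leqslant \Bigl(\int |\gamma_1-x|^p\,d\mu\Bigr)^{1/p} + \Bigl(\int |\gamma_2-y|^p\,d\sigma\Bigr)^{1/p},
\end{equation*}
and letting $\varepsilon\to 0$ yields $W_p(\mu,\nu)\leqslant W_p(\mu,\sigma)+W_p(\sigma,\nu)$.

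The main obstacle is \textbf{symmetry}, since the Monge formulation used in \eqref{def:Wp} is not manifestly symmetric in $\mu$ and $\nu$: an admissible $\gamma$ pushes $\mu$ onto $\nu$, but its naive ``inverse'' is not automatically well-defined or single-valued. This is where absolute continuity of both measures is crucial. By the Brenier/Gangbo--McCann existence and uniqueness results, for $\mu,\nu\in\mathcal{P}_c^{ac}(\mathbb{R}^d)$ the infimum in \eqref{def:Wp} is attained by a map $\gamma^*$ that is $\mu$-essentially injective and whose inverse $(\gamma^*)^{-1}$ is $\nu$-measurable with $(\gamma^*)^{-1}\#\nu=\mu$. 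The change of variables $y=\gamma^*(x)$ then gives
\begin{equation*}
\int_{\mathbb{R}^d}|(\gamma^*)^{-1}(y)-y|^p\,d\nu(y) = \int_{\mathbb{R}^d}|x-\gamma^*(x)|^p\,d\mu(x),
\end{equation*}
so $W_p(\nu,\mu)\leqslant W_p(\mu,\nu)$; interchanging the roles of $\mu$ and $\nu$ yields equality. Alternatively, one can derive symmetry by referring to the equivalence between the Monge formulation \eqref{def:Wp} and the Kantorovich formulation via transport plans, which is manifestly symmetric; the two formulations coincide on $\mathcal{P}_c^{ac}(\mathbb{R}^d)$ precisely because of the existence of an optimal Monge map. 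Either way, the proof reduces to citing standard results from optimal transport theory (as e.g.\ in Villani or Santambrogio), so at the level of this paper a brief reference should suffice.
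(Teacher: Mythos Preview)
Your verification of the metric axioms is correct, and you have correctly identified symmetry as the only nontrivial point in the Monge formulation~\eqref{def:Wp}; your two remedies (invertibility of the Brenier/Gangbo--McCann map, or equivalence with the symmetric Kantorovich problem on $\mathcal{P}_c^{ac}$) are the standard ones. One small caveat: the ``essentially injective optimal map'' route, as you phrase it, is clean only for $p>1$ (strictly convex cost); for $p=1$ you should fall back on your Kantorovich alternative, which you already mention.

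That said, the paper does not prove this proposition at all: it merely states it and defers to \cite[Chap.~7]{V03} for details. So your argument is not a different route to the paper's proof but rather a self-contained expansion of what the paper treats as a black box. Your own closing remark---that ``at the level of this paper a brief reference should suffice''---is exactly what the authors do.
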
\label{prop Wp}
The Wasserstein distance can be extended to all pairs of measures $\mu,\nu$ compactly supported with the same mass $\mu(\mb{R}^d)=\nu(\mb{R}^d)\neq0$, by the formula
\vspace*{-3mm}$$W_p(\mu,\nu)=|\mu|^{1/p} W_p\left(\frac{\mu}{|\mu|},\frac{\nu}{|\nu|}\right).\vspace*{-1mm}$$
\end{definition}
For more details about the Wasserstein distance, in particular for its definition on the whole space of measures $ \mc{P}_c(\mb{R}^d)$, we refer to \cite[Chap. 7]{V03}.

 We now recall a standard result for the continuity equation: 
 \begin{theorem}[see \cite{V03}]
Let $T\in\mb{R}$,  $\mu^0\in \mc{P}^{ac}_c(\mb{R}^d)$ and  $w$ be a vector field uniformly bounded, Lipschitz in space and measurable in time.
Then the system 
\vspace*{-1mm}\begin{equation}\label{eq:transport sec 2}
	\left\{
	\begin{array}{ll}
\partial_t\mu +\nabla\cdot(w\mu)=0&\mbox{ in }\mb{R}^d\times\mb{R},\\\noalign{\smallskip}
\mu(\cdot,0)=\mu^0&\mbox{ in }\mb{R}^d
	\end{array}
	\right.
\vspace*{-1mm}\end{equation}
admits a unique solution\footnote{Here, $\mc{P}_c^{ac}(\mb{R}^d)$ is equipped with the weak topology, that coincides with the topology induced by the Wasserstein distance $W_p$, see \cite[Thm 7.12]{V03}.} $\mu$ in $\mc{C}^0([0,T];\mc{P}_c^{ac}(\mb{R}^d))$. Moreover, it holds $\mu(\cdot,t)=\Phi_t^{w}\#\mu^0$ for all $t\in \mb{R}$,
where the flow $\Phi_{t}^{w}(x^0)$ 
is the unique solution at time $t$ to
\vspace*{-1mm}\begin{equation}\label{eq charac}
\left\{\begin{array}{l}
\dot x(t) =w(x(t),t)\mbox{ for a.e. }t\geqslant 0,\\\noalign{\smallskip}
x(0)=x^0.
\end{array}\right.
\vspace*{-1mm}\end{equation}
 \end{theorem}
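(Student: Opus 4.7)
The plan is to exhibit $\mu_t := \Phi_t^w \# \mu^0$ as the unique solution, verify the required regularity by hand, and then close uniqueness by a duality argument against the backward characteristic equation.

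\emph{Existence and regularity.} Since $w$ is uniformly bounded (by some $M$), Lipschitz in space (with constant $L$) and measurable in time, Carath\'eodory theory produces the flow $\Phi_t^w$ solving \eqref{eq charac}, and Gr\"onwall's inequality makes it a bi-Lipschitz homeomorphism of $\mb{R}^d$ with constant $e^{L|t|}$ at each $t\in[0,T]$. Consequently $\mu_t := \Phi_t^w \# \mu^0$ has compact support (from $|\Phi_t^w(x) - x| \leqslant Mt$) and inherits absolute continuity from $\mu^0$ via the change-of-variables formula. Testing against $\phi \in \mc{C}^\infty_c(\mb{R}^d)$ and differentiating under the integral,
\begin{equation*}
\frac{d}{dt}\int \phi\,d\mu_t = \int \nabla\phi(\Phi_t^w x) \cdot w(\Phi_t^w x, t)\,d\mu^0(x) = \int \nabla\phi \cdot w\,d\mu_t,
\end{equation*}
which is exactly the weak form of \eqref{eq:transport sec 2}. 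The estimate $|\Phi_s^w(x) - \Phi_t^w(x)| \leqslant M|s-t|$ furnishes an explicit transport map between $\mu_s$ and $\mu_t$ and yields $W_p(\mu_s,\mu_t) \leqslant M|s-t|$, hence $\mu \in \mc{C}^0([0,T];\mc{P}_c^{ac}(\mb{R}^d))$.

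\emph{Uniqueness.} Let $\nu \in \mc{C}^0([0,T]; \mc{P}_c^{ac}(\mb{R}^d))$ be another solution with $\nu_0 = \mu^0$. Fix $T_0 \in (0,T]$ and $\psi \in \mc{C}^\infty_c(\mb{R}^d)$, and let $\phi(t,\cdot)$ be the pullback of $\psi$ along the characteristics of $w$ from time $t$ to time $T_0$: then $\phi(T_0,\cdot) = \psi$, $\phi$ is uniformly Lipschitz in $x$ on $[0,T_0]$, and $\partial_t\phi + w\cdot\nabla\phi = 0$ in the almost-everywhere sense. Inserting $\phi$ into the weak formulation for $\nu$ yields $\frac{d}{dt}\int\phi(t,\cdot)\,d\nu_t = 0$, whence
\begin{equation*}
\int\psi\,d\nu_{T_0} = \int \phi(0,\cdot)\,d\mu^0 = \int \psi \circ \Phi_{T_0}^w\,d\mu^0 = \int \psi\,d\bigl(\Phi_{T_0}^w\#\mu^0\bigr).
\end{equation*}
Since $\psi$ is arbitrary, $\nu_{T_0} = \mu_{T_0}$, giving uniqueness.

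The main obstacle is the uniqueness step: because $w$ is merely measurable in $t$, the backward test function $\phi$ is not classically differentiable in time, so the computation $\frac{d}{dt}\int\phi\,d\nu_t = 0$ requires justification through an absolute-continuity argument—either by first smoothing $w$ in time, obtaining the identity for the regularized flow, and passing to the limit using the $W_p$-continuity of $\nu$ and uniform Lipschitz bounds on $\phi$, or by a direct Fubini argument combining the characteristic identity with the $L^\infty$ control on $w$. The existence, support, absolute continuity, and $W_p$-continuity pieces, by contrast, follow routinely once the bi-Lipschitz flow is in hand.
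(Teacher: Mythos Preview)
The paper does not prove this theorem at all: it is stated as a known result, with the citation ``see \cite{V03}'' in the theorem header, and the paper moves on immediately to the two properties of the Wasserstein distance. So there is no proof in the paper to compare against.

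Your sketch is the standard argument and is essentially correct. The existence part is routine once one has the Carath\'eodory flow: push-forward by a bi-Lipschitz homeomorphism preserves absolute continuity and compact support, and differentiating $\int\phi\,d\mu_t$ along the flow gives the weak formulation. The $W_p$-Lipschitz estimate in $t$ via the obvious transport map $\Phi_s^w\circ(\Phi_t^w)^{-1}$ is also the right way to get continuity in the Wasserstein topology. For uniqueness you correctly identify the only genuine subtlety: with $w$ merely measurable in time, the backward test function $\phi(t,x)=\psi(\Phi^w_{T_0,t}(x))$ is Lipschitz in $x$ but only absolutely continuous in $t$, so one cannot simply plug it into a distributional formulation written for $\mc{C}^1$ test functions. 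Either of the remedies you mention (mollify $w$ in time and pass to the limit, or argue directly via Fubini on the integrated weak formulation) works; the version in Ambrosio--Gigli--Savar\'e or in \cite{V03} proceeds along these lines. In short, your proposal supplies a correct proof where the paper simply invokes the literature.
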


In the rest of the paper, the following properties of the Wasserstein distance will be helpful.
\begin{propi}[see \cite{PR13}]
Let $\mu,\nu\in\mc{P}_{c}^{ac}(\mb{R}^d)$. 
Let $w:\mb{R}^d\times\mb{R}\rightarrow\mb{R}^d$ be a vector field uniformly bounded, 
Lipschitz in space and measurable in time. For each $t\in\mb{R}$, it holds
\vspace*{-2mm}\begin{equation}\label{ine wasser 2}
W^p_p(\Phi_t^w\#\mu,\Phi_t^w\#\nu)
\leqslant e^{(p+1)L|t|} W^p_p(\mu,\nu),
\vspace*{-2mm}\end{equation}
where $L$ is the Lipschitz constant of $w$.
\end{propi}
\begin{propi}  Let $\mu,~\nu,~\rho,~\eta$ some positive measures satisfying 
$\mu(\mb{R}^d)=\nu(\mb{R}^d)$ and $\rho(\mb{R}^d)=\eta(\mb{R}^d)$. It then holds
\vspace*{-1mm}\begin{equation}\label{ine wasser}
W^p_p(\mu+\rho,\nu+\eta)
\leqslant W^p_p(\mu,\nu)+W^p_p(\rho,\eta).
\vspace*{-1mm}\end{equation}
\end{propi}
Using the properties of Wasserstein distance given in Section 1 of  \cite{PR13}, 
we can replace $W_p$ by $W_1$ in the definition of the approximate controllability.


%

\section{Proof of Theorem \ref{th1}}\label{sec:lip}

In this section, we prove  approximate controllability of System \eqref{eq:transport}. The proof is based on three approximation steps, corresponding to Proposition \ref{prop dim=d}, \ref{prop1}, and \ref{prop2}. The proof is then given at the end of the section.

In a first step, we suppose that the open connected control subset 
$\omega$ contains the support of both $\mu^0$, $\mu^1$.
%

\begin{prop}\label{prop dim=d}
Let $\mu^0,\mu^1\in\mc{P}_c^{ac}(\mb{R}^d)$ be such that $\supp(\mu^0)\subset\omega$ and 
$\supp(\mu^1)\subset\omega$.
Then, for  all  $T>0$, 
System \eqref{eq:transport} is approx. contr. at time $T$
with $\mathds{1}_{\omega}u$ in $\mc{U}$.
\end{prop}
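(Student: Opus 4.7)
The goal is, for every $\varepsilon>0$, to construct an admissible control $\mathds{1}_{\omega}u\in\mc{U}$ whose flow sends $\mu^0$ to within $W_1$-distance $\varepsilon$ of $\mu^1$ at time $T$. Since $\supp(\mu^0),\supp(\mu^1)$ are compact and $\omega$ is open, there exists $\delta>0$ such that both supports lie at distance at least $\delta$ from $\partial\omega$, so the entire construction can be carried out inside a compact subset of $\omega$. Then by a properly chosen cutoff, $\mathds{1}_\omega u$ will automatically be Lipschitz on $\mb{R}^d$.

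The first step is a discretization. Any measure in $\mc{P}_c^{ac}(\mb{R}^d)$ is approximable in $W_1$ by piecewise-constant densities on dyadic cubes contained in its support (a standard consequence of Lebesgue differentiation and the dual formulation of $W_1$). Applying this jointly to $\mu^0,\mu^1$ and refining the decompositions until the weights match, one obtains $N\in\mb{N}$, nonnegative weights $\alpha_i$ summing to $1$, and cubes $Q^0_i\subset\supp(\mu^0)$, $Q^1_i\subset\supp(\mu^1)$ such that $\tilde\mu^0:=\sum_i\alpha_i|Q^0_i|^{-1}\mathds{1}_{Q^0_i}$ and $\tilde\mu^1:=\sum_i\alpha_i|Q^1_i|^{-1}\mathds{1}_{Q^1_i}$ are within $W_1$-distance $\varepsilon'$ of $\mu^0,\mu^1$, for an $\varepsilon'$ to be fixed later.

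The second step is an explicit transport construction. For each $i$, pick a smooth curve $\gamma_i:[0,T]\to\omega$ joining the centers of $Q^0_i$ and $Q^1_i$, remaining at distance at least $\delta/2$ from $\partial\omega$, and with pairwise disjoint thin tubular neighborhoods $\mc{T}_i$ (automatic for generic curves in $d\geq 2$; in $d=1$ one matches cubes in their natural order along the interval $\omega$). Along $\mc{T}_i$, define a time-dependent affine diffeomorphism $A_i(t,\cdot)$ interpolating between the identity on $Q^0_i$ at $t=0$ and a chosen affine map $Q^0_i\to Q^1_i$ at $t=T$, with intermediate images $A_i(t)(Q^0_i)\subset\mc{T}_i$. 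Set $w(A_i(t,x),t):=\partial_t A_i(t,x)$ on $\mc{T}_i$; this is Lipschitz in space and bounded. Let $\psi\in\mc{C}^\infty_c(\omega;[0,1])$ with $\psi\equiv 1$ on $\bigcup_i\mc{T}_i$, and define $u(x,t):=\psi(x)(w(x,t)-v(x))$, extended by zero outside $\omega$. Then $\mathds{1}_\omega u=u$ is Lipschitz in space, bounded, measurable in time, hence in $\mc{U}$, and on each $\mc{T}_i$ we have $v+\mathds{1}_\omega u=w$, so the controlled flow transports $Q^0_i$ exactly onto $Q^1_i$ via the affine map $A_i(T)$. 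Since affine maps have constant Jacobian, the pushforward of $\tilde\mu^0$ under $\Phi_T^{v+\mathds{1}_\omega u}$ is exactly $\tilde\mu^1$.

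The conclusion follows from the triangle inequality applied to $W_1(\mu(T),\mu^1)\leq W_1(\Phi_T^{v+\mathds{1}_\omega u}\#\mu^0,\Phi_T^{v+\mathds{1}_\omega u}\#\tilde\mu^0)+W_1(\tilde\mu^1,\mu^1)$, combined with the contraction estimate \eqref{ine wasser 2} and a sufficiently small choice of $\varepsilon'$. The delicate point is the geometric construction: producing non-intersecting tubes $\mc{T}_i$ inside $\omega$ whose affine deformations remain inside $\omega$, and gluing the per-tube affine flows into a single globally Lipschitz vector field. Disjointness of the $\mc{T}_i$ requires the separate argument in dimension one mentioned above, while Lipschitz gluing is clean precisely because the tubes are pairwise disjoint: a single cutoff $\psi$ suffices to interpolate from $w$ on the tubes to $v$ near $\partial\omega$.
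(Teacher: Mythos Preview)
Your high-level strategy (discretize into equal-mass pieces, move each piece by an explicit affine flow, glue the local velocities with a cutoff) is the same as the paper's, but the geometric step contains a genuine gap. The claim that ``pairwise disjoint thin tubular neighborhoods $\mc{T}_i$ [are] automatic for generic curves in $d\geq 2$'' is false for $d=2$: two generic smooth curves in the plane intersect transversally, not disjointly. If the pairing $Q^0_i\leftrightarrow Q^1_i$ is topologically crossing (e.g.\ centers at $(0,0)\!\to\!(1,1)$ and $(1,0)\!\to\!(0,1)$ inside a convex $\omega$), then \emph{no} choice of curves in $\omega$ can be made pairwise disjoint, so no disjoint tubes exist and your vector field $w$ is multiply defined. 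To repair this in $d=2$ you would have to argue that the matching can be re-chosen non-crossing (for instance via cyclical monotonicity of an optimal coupling) and, on top of that, that the tubes can be taken wide enough to contain the intermediate affine images $A_i(t)(Q^0_i)$ yet thin enough to remain pairwise disjoint and inside $\omega$. None of this is addressed, and it is precisely the delicate point.

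The paper avoids the tube-crossing issue by a more rigid construction. It reduces to $\omega=(0,1)^d$ and builds two axis-aligned partitions of the cube into rectangles $A_{ij}$ and $B_{ij}$, of equal $\mu^0$- and $\mu^1$-mass $1/n^2$, using coordinate-wise quantiles of the measures themselves (so no preliminary piecewise-constant approximation is needed). After shrinking each rectangle slightly to $\widetilde A_{ij},\widetilde B_{ij}$, the rectangle corners are interpolated \emph{linearly} in time, yielding intermediate rectangles $C_{ij}(t)$. Because linear interpolation of ordered real numbers preserves order, the $C_{ij}(t)$ are automatically pairwise disjoint for every $t$; this replaces your tube-disjointness hypothesis and works uniformly in every dimension, including $d=2$. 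The per-cell affine velocity fields then glue to a single Lipschitz control, and the $W_1$ error comes only from the small mass left in the shells $A_{ij}\setminus\widetilde A_{ij}$, which vanishes as $n\to\infty$.
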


\begin{proof}
We assume that  $d:=2$, $T:=1$ and $\omega:=(0,1)^2$, but
the reader will see that the proof can be clearly adapted to any space dimension.
Fix $n\in\mb{N}^*$. Define $a_0:=0$, $b_0:=0$ 
and the points $a_i,b_i$ for all $i\in\{1,...,n\}$ 
by induction as follows: suppose that for $i\in\{0,...,n-1\}$  the points $a_i$ and $b_i$
are given, then  $a_{i+1}$ and $b_{i+1}$ are the smallest values satisfying 
\vspace*{-1mm}\begin{equation*}
\begin{array}{c}\int_{(a_i,a_{i+1})\times\mb{R}}d\mu^0 =\frac{1}{n}
\mbox{ ~~~and~~~ }\int_{(b_i,b_{i+1})\times\mb{R}}d\mu^1 =\frac{1}{n}.
\vspace*{-1mm}\end{array}\end{equation*}
Again, for all $i\in\{0,...,n-1\}$, we define $a_{i,0}:=0$, $b_{i,0}:=0$
and supposing that for a  $j\in\{0,...,n-1\}$  
the points $a_{i,j}$ and $b_{i,j}$ are already defined, 
$a_{i,j+1}$ and $b_{i,j+1}$ are the smallest values such that
\vspace*{-1mm}\begin{equation*}
\begin{array}{c}
\int_{A_{ij}}d\mu^0 =\frac{1}{n^2}
\mbox{ ~~~and~~~ }
\int_{B_{ij}}d\mu^1 =\frac{1}{n^2},
\end{array}
\vspace*{-1mm}\end{equation*}
where $A_{ij}:=(a_i,a_{i+1})\times(a_{ij},a_{i(j+1)})$ 
and $B_{ij}:=(b_i,b_{i+1})\times(b_{ij},b_{i(j+1)})$.
Since $\mu^0$ and $\mu^1$ have a mass equal to $1$ and are supported in $(0,1)^2$, then 
$a_n,b_n\leqslant1$ and 
$a_{i,n},~b_{i,n}\leqslant1$ for all $i\in \{0,...,n-1\}$.
We give in Figure \ref{fig: mesh} an example of such decomposition.

\vspace*{-1mm}
\begin{figure}[h]
\begin{center}
\hspace*{-2mm}\begin{tikzpicture}[scale=2.1]
\draw[->] (0,0) -- (0,2.2);
\draw[->] (0,0) -- (3.2,0);
\draw[-] (0,2) -- (3,2);
\draw[-] (3,2) -- (3,0);
\path (0,2.3) node {$x_2$};
\path (3.3,0) node {$x_1$};
\path (0,-0.1) node {$a_0$};
\draw[-] (0.4,0) -- (0.40,2);
\path (0.4,-0.1) node {$a_1$};
\draw[-] (0,0.2) -- (0.4,0.2);
\path (-0.15,0.2) node {$a_{01}$};
\draw[-] (0,0.6) -- (0.4,0.6);
\path (-0.15,0.6) node {$a_{02}$};
\path (-0.1,1) node {$\vdots$};
\path (0.2,1) node {$\vdots$};
\draw[-] (0,1.5) -- (0.4,1.5);
\path (-0.3,1.5) node {$a_{0(n-2)}$};
\draw[-] (0,1.8) -- (0.4,1.8);
\path (-0.3,1.8) node {$a_{0(n-1)}$};
\path (-0.2,2) node {$a_{0n}$};
\draw[-] (0.7,0) -- (0.7,2);
\path (0.7,-0.1) node {$a_2$};
\draw[-] (0.4,0.1) -- (0.7,0.1);
\path (0.25,0.1) node {$a_{11}$};
\draw[-] (0.4,0.5) -- (0.7,0.5);
\path (0.25,0.5) node {$a_{12}$};
\path (0.55,1) node {$\vdots$};
\draw[-] (0.4,1.6) -- (0.7,1.6);
\draw[-] (0.4,1.7) -- (0.7,1.7);
\fill [opacity=0.5,pattern=north east lines] (0.4,0) -- (0.4,2) -- (0.7,2) -- (0.7,0);
\path (0.55,1.2) node {$\frac{1}{n}$};

\path (1,-0.1) node {$\cdots$};
\path (1,1) node {$\cdots$};
\draw[-] (1.3,0) -- (1.3,2);
\path (1.3,-0.1) node {$a_i$};

\draw[-] (1.3,0.2) -- (1.7,0.2);
\path (1.2,0.2) node {$a_{i1}$};

\path (1.5,0.5) node {$\vdots$};

\draw[-, thick] (1.3,0.9) -- (1.7,0.9);
\path (1.15,0.9) node {$a_{ij}$};
\draw[-, thick] (1.3,1.1) -- (1.7,1.1);
\path (1.05,1.1) node {$a_{i(j+1)}$};
\path (1.5,1.5) node {$\vdots$};
\draw[-, thick] (1.3,0.9) -- (1.3,1.1);
\draw[-, thick] (1.7,0.9) -- (1.7,1.1);

\fill [opacity=0.5,pattern=dots] (1.3,0.9) -- (1.3,1.1) -- (1.7,1.1) -- (1.7,0.9);
\path (1.5,1.0) node {$1/n^2$};

\draw[-] (1.3,1.8) -- (1.7,1.8);
\path (1.05,1.8) node {$a_{i(n-1)}$};

\draw[-] (1.7,0) -- (1.7,0.9);
\draw[-] (1.7,1.1) -- (1.7,2);
\path (1.7,-0.1) node {$a_{i+1}$};

\path (2,-0.1) node {$\cdots$};
\path (2,1) node {$\cdots$};

\draw[-] (2.3,0) -- (2.3,2);
\path (2.3,-0.1) node {$a_{n-2}$};

\draw[-] (2.3,0.1) -- (2.8,0.1);
\draw[-] (2.3,0.4) -- (2.8,0.4);
\path (2.5,1) node {$\vdots$};
\draw[-] (2.3,1.6) -- (2.8,1.6);
\draw[-] (2.3,1.7) -- (2.8,1.7);

\draw[-] (2.8,0) -- (2.8,2);
\path (2.8,-0.1) node {$a_{n-1}$};

\draw[-] (2.8,0.3) -- (3,0.3);
\draw[-] (2.8,0.5) -- (3,0.5);
\path (2.9,1) node {$\vdots$};
\draw[-] (2.8,1.5) -- (3,1.5);
\draw[-] (2.8,1.9) -- (3,1.9);
\path (3.05,-0.1) node {$a_n$};

\end{tikzpicture}
\caption{Example of a decomposition of $\mu^0$.}
\label{fig: mesh}
\end{center}
\end{figure}
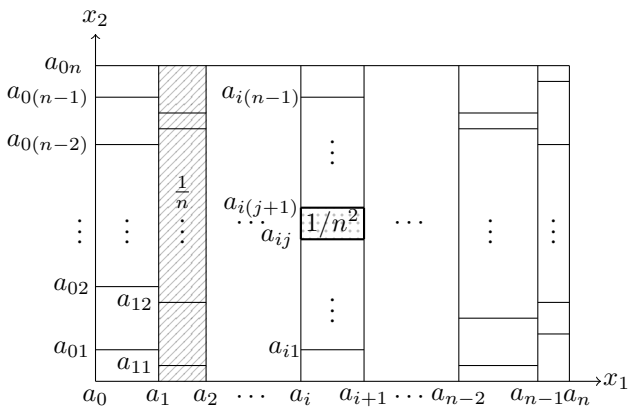
\vspace*{-1mm}

If one aims to define a vector field sending each $A_{ij}$ to $B_{ij}$, then some shear stress 
is naturally introduced to the interfaces of the cells. 
To overcome this problem, we first define sets $\widetilde{A}_{ij}\subset\subset A_{ij}$  
and  $\widetilde{B}_{ij}\subset\subset B_{ij}$
for all $i,j\in\{0,...,n-1\}$. We then send  the mass of $\mu^0$ from each $\widetilde{A}_{ij}$ to each $\widetilde{B}_{ij}$, while we do not control the mass contained in $A_{ij}\backslash\widetilde{A}_{ij}$.
More precisely, for all $i,j\in\{0,...,n-1\}$, we define, 
$a_i^-,~a_i^+,a_{ij}^-,~a_{ij}^+$
the smallest values such that
\vspace*{-1mm}\begin{equation*}
\hspace*{-2mm}\begin{array}{c}\int_{(a_i,a_{i}^-)\times(a_{ij},a_{i(j+1)})}d\mu^0 =
\int_{(a_i^+,a_{i+1})\times(a_{ij},a_{i(j+1)})}d\mu^0 =\frac{1}{n^3}
\end{array}\vspace*{-1mm}\end{equation*}
and
\vspace*{-1mm}\begin{equation*}
\begin{array}{rcl}
\int_{(a_i^-,a_{i}^+)\times(a_{ij},a_{ij}^-)}d\mu^0 &=&
\int_{(a_i^-,a_{i}^+)\times(a_{ij}^+,a_{i(j+1)})}d\mu^0\\ \noalign{\smallskip}
&=&\frac{1}{n}\times\left(\frac{1}{n^2}-\frac{2}{n^3}\right).
\end{array}\vspace*{-1mm}\end{equation*}
\noindent We similarly define $b_i^+,~b_i^-,~b_{ij}^+,~b_{ij}^-$.
We finally define
$$\widetilde{A}_{ij}:=[a_i^-,a_i^+)\times[a_{ij}^-,a_{ij}^+)
\mbox{ and }
\widetilde{B}_{ij}:=[b_i^-,b_i^+)\times[b_{ij}^-,b_{ij}^+).$$

The goal is to build a solution to System    \eqref{eq:transport}
such that the corresponding flow $\Phi_t^u$ satisfies  
\vspace*{-1mm}\begin{equation}\label{phi(Aij) in Bij}
\Phi_T^u(\widetilde{A}_{ij})=\widetilde{B}_{ij},
\vspace*{-1mm}\end{equation}
for all $i,j\in\{0,...,n-1\}$. 
We observe that we do not take into account the displacement of the mass contained in 
$A_{ij}\backslash \widetilde{A}_{ij}$.
We will show  that the corresponding term 
$W_1(\sum_{ij}\Phi_T^{v+u}\#\mu^0_{|A_{ij}\backslash\widetilde{A}_{ij}},\sum_{ij}\mu^1_{|B_{ij}\backslash\widetilde{B}_{ij}})$
tends to zero when $n$ goes to the infinity.
The rest of the proof is divided into two steps. 
In a first step, we build a flow and a velocity field such that its flow satisfies \eqref{phi(Aij) in Bij}.
In a second step, we compute the Wasserstein distance between $\mu^1$ and $\mu(T)$
showing that it converges to zero when $n$ goes to infinity.

{\bf Step 1:} 
We first build a flow satisfying \eqref{phi(Aij) in Bij}.
For all $i\in\{0,...,n-1\}$, we denote by $c^-_i$ and $c^+_i$
  the linear functions equal to $a_i^-$ and $a_i^+$ at time $t=0$ and equal to $b_i^-$
  and $b_i^+$ at time $t=T=1$, respectively
\textit{i.e.}
\vspace*{-1mm}\begin{equation*}
c^-_i(t)=(b_i^--a_i^-)t+a_i^-
\mbox{~and ~}
c^+_i(t)=(b_i^+-a_i^+)t+a_i^+.
\vspace*{-1mm}\end{equation*}
Similarly, for all $i,j\in\{0,...,n-1\}$, we denote by $c^-_{ij}$  and $c^+_{ij}$
the linear functions equal to $a_{ij}^-$ and $a_{ij}^+$ at time $t=0$ and equal to 
$b_{ij}^-$ and $b_{ij}^+$ at time $t=T=1$, respectively, \textit{i.e.}
\vspace*{-1mm}\begin{equation*}
c^-_{ij}(t)=(b_{ij}^--a_{ij}^-)t+a_{ij}^-
\mbox{~ and ~}
c^+_{ij}(t)=(b_{ij}^+-a_{ij}^+)t+a_{ij}^+.
\vspace*{-1mm}\end{equation*}

Consider the application being the following  linear combination of $c_i^-,~c_i^+$ and 
$c_{ij}^-,~c_{ij}^+$ in $\widetilde{A}_{ij}$, \textit{i.e.}
\vspace*{-1mm}\begin{equation}\label{expr char 2d}
x(x^0,t):=
\left(\begin{array}{c}
\dfrac{a_i^+-x^0_1}{a_i^+-a_i^-}c^-_i(t)
+\dfrac{x^0_1-a_i^-}{a_i^+-a_i^-}c^+_i(t)\\
\dfrac{a_{ij}^+-x^0_2}{a_{ij}^+-a_{ij}^-}c^-_{ij}(t)
+\dfrac{x^0_2-a_{ij}^-}{a_{ij}^+-a_{ij}^-}c^+_{ij}(t)
\end{array}\right),
\vspace*{-1mm}\end{equation}
when  $x^0\in \widetilde{A}_{ij}$. 
Let us prove that an extension of the application $(x^0,t)\mapsto \Phi_t^u(x^0):=x(x^0,t)$ is a flow associated to a velocity field $u$.
We remark that $t\mapsto x(x^0,t)$ is $\mc{C}^1$ and is solution to
\vspace*{-1mm} \begin{equation*}
\left\{\begin{array}{ll}
\frac{dx_{1}(x^0,t)}{dt}=\alpha_{i}(t)x_{1}(x^0,t)+\beta_i(t)&~\forall t\in[0,T],\\
\frac{dx_{2}(x^0,t)}{dt}=\alpha_{ij}(t)x_{2}(x^0,t)+\beta_{ij}(t)&~\forall t\in[0,T],
\end{array}\right.
\vspace*{-1mm}\end{equation*}
 where for all $t\in[0,1]$
\vspace*{-1mm}  \begin{equation*}
\left\{\begin{array}{l}
\alpha_i(t)=\frac{b_i^+-b_i^-+a_i^--a_i^+}{c^+_{i}(t)-c^-_{i}(t)}
,~
\beta_i(t)=\frac{a_i^+b_{i}-a_i^-b_i^+}{c_i^+(t)-c_i^-(t)},\\
\alpha_{ij}(t)=\frac{b_{ij}^+-b_{ij}^-+a_{ij}^--a_{ij}^+}{c^+_{ij}(t)-c^-_{ij}(t)}
,~
\beta_{ij}(t)=\frac{a_{ij}^+b_{ij}^--a_{ij}^-b_{ij}^+}{c_{ij}^+(t)-c_{ij}^-(t)}.
\end{array}\right.
\vspace*{-1mm}\end{equation*}
For all $t\in [0,1]$, consider the set 
$C_{ij}(t):=[c_i^-(t),c_i^+(t))\times [c_{ij}^-(t),c_{ij}^+(t)).$
We remark that $C_{ij}(0)=\widetilde{A}_{ij}$ and $C_{ij}(T)=\widetilde{B}_{ij}$.
On $C_{ij}:=\{(x,t): t\in [0,T], x\in C_{ij}(t)\}$, we then define the velocity field $u$ by
\vspace*{-1mm}  \begin{equation*}
u_1(x,t)=\alpha_{i}(t)x_1+\beta_i(t)\mbox{ and }
u_2(x,t)=\alpha_{ij}(t)x_2+\beta_{ij}(t),
\vspace*{-1mm}\end{equation*}
 for all $(x,t)\in C_{ij}$ ($x=(x_1,x_2)$). 
We extend $u$ by a $\mc{C}^{\infty}$ and uniformly bounded function outside $\cup_{ij}C_{ij}$,
then having $u\in\mc{U}$.
Then, System \eqref{eq:transport} admits a unique solution and the 
flow on $C_{ij}$ is given by the expression \eqref{expr char 2d}.

{\bf Step 2:} We now prove that the refinement of the grid provides convergence to the target $\mu_1$,  {\it i.e. }
\vspace*{-1mm}\begin{equation}\label{e-appcont}
W_1(\mu^1,\mu(T))\underset{n\rightarrow\infty}{\longrightarrow}0.
\vspace*{-1mm}\end{equation}
We remark that 
 \begin{equation*}
\begin{array}{c} \int_{\widetilde{B}_{ij}}d\mu(T)
 =\int_{\widetilde{B}_{ij}}d\mu^1
 =\frac{(n-2)^2}{n^4}.\end{array}
 \end{equation*}
Hence, by defining
$ R:=(0,1)^2~\backslash~ \bigcup\limits_{ij}\widetilde{B}_{ij},$
 we  also have
\vspace*{-1mm} \begin{equation*}
\begin{array}{c} \int_{R}d\mu(T) =\int_{R}d\mu^1
 =1-\frac{(n-2)^2}{n^2}.\end{array}
\vspace*{-1mm} \end{equation*}
It comes that 
\vspace*{-1mm} \begin{equation}\label{ine norm 1}
\begin{array}{c}
W_1(\mu^1,\mu(T))
\leqslant \sum\limits_{i,j=1}^nW_1(\mu^1\times\mathds{1}_{\widetilde{B}_{ij}},\mu(T)\times\mathds{1}_{\widetilde{B}_{ij}})\\
+W_1(\mu^1\times\mathds{1}_{R},\mu(T)\times\mathds{1}_{R}).
\end{array}
\vspace*{-1mm}\end{equation}
We estimate each term in the right-hand side. Since we deal with absolutely continuous measures, 
using Proposition \ref{prop Wp},
there exist measurable maps  
  $\gamma_{ij}:\mb{R}^2\rightarrow\mb{R}^2$, for all $i,j\in\{0,...,n-1\}$, 
  and   $\overline{\gamma} :\mb{R}^2\rightarrow\mb{R}^2$ such that
 \vspace*{-1mm}\begin{equation*}
 \gamma_{ij}\#(\mu^1\times\mathds{1}_{\widetilde{B}_{ij}})
 =\mu(T)\times\mathds{1}_{\widetilde{B}_{ij}}
 \vspace*{-1mm}\end{equation*}
and
  \vspace*{-1mm}\begin{equation*}
 \overline{\gamma}\#(\mu^1\times\mathds{1}_{R})
 =\mu(T)\times\mathds{1}_{R}.
\vspace*{-1mm} \end{equation*}
 In the first term, for each $i,j\in\{0,..., n-1\}$, observe that $\gamma_{ij}$ moves masses inside $B_{ij}$ only. Thus 
 \begin{equation}\label{ine norm 2}
\begin{array}{c}
W_1(\mu^1\times\mathds{1}_{\widetilde{B}_{ij}},\mu(T)\times\mathds{1}_{\widetilde{B}_{ij}})
= \int_{\widetilde{B}_{ij}}|x-\gamma_{ij}(x)|d\mu^1(x)\\
\leqslant (b_i^+-b_i^-+b_{ij}^+-b_{ij}^-)
\frac{(n-2)^2}{n^4}.
\end{array}
 \end{equation}
Concerning the second term in \eqref{ine norm 1}, observe that $\bar\gamma$ moves a small mass in the bounded set $\omega$. Thus it holds
 \begin{equation}\label{ine norm 3}
 \begin{array}{c}
 W_1(\mu^1\times\mathds{1}_{R},\mu(T)\times\mathds{1}_{R})
\leqslant\int_{R}|x-\overline{\gamma}(x)|d\mu^1(x)\\
\leqslant \sqrt{2}\left(1-\frac{(n-2)^2}{n^2}\right)
=4\sqrt{2}\frac{n-1}{n^2}.
\end{array}
 \end{equation}
We thus have \eqref{e-appcont} by combining \eqref{ine norm 1}, \eqref{ine norm 2} and \eqref{ine norm 3}. \end{proof}

In the rest of the section, 
 we remove the constraints $\supp(\mu^0)\subset \omega$ and 
$\supp(\mu^1)\subset \omega$, now imposing  Condition \ref{cond1}.
First of all, we give a consequence of Condition \ref{cond1}.
\begin{lemma}\label{lemma cond}
If Condition \ref{cond1}  is satisfied for $\mu^0,~\mu^1\in\mc{P}_c(\mb{R}^d)$, then the following Condition \ref{cond2} is satisfied too:
\end{lemma}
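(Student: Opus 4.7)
The plan is to upgrade the pointwise condition in Condition \ref{cond1} to a uniform/finite‑cover statement by combining continuity of the flow $\Phi_t^v$ with compactness of the supports of $\mu^0$ and $\mu^1$. The expected form of Condition \ref{cond2} is a uniform version of Condition \ref{cond1}: namely, the existence of finitely many open sets covering the support together with times that send each cover element \emph{entirely} into $\omega$, or equivalently the existence of a uniform upper bound $T^0$ (resp. $T^1$) on the entry times for points of $\supp(\mu^0)$ (resp. exit times for $\supp(\mu^1)$). Both formulations follow from the same compactness argument.

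First I would treat the forward statement for $\mu^0$. Fix $x^0\in\supp(\mu^0)$. By Condition \ref{cond1}(i), there exists $t^0(x^0)>0$ with $\Phi_{t^0(x^0)}^v(x^0)\in\omega$. Since $v$ is Lipschitz and uniformly bounded, the flow map $y\mapsto \Phi_{t^0(x^0)}^v(y)$ is a homeomorphism (by the same result \cite[Th.~2.1.1]{BP07} cited in the introduction). As $\omega$ is open, its preimage under this homeomorphism is an open neighborhood $U_{x^0}$ of $x^0$ on which $\Phi_{t^0(x^0)}^v(U_{x^0})\subset\omega$.

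Next, since $\mu^0\in\mc{P}_c(\mb{R}^d)$, its support $\supp(\mu^0)$ is compact, so the open cover $\{U_{x^0}\}_{x^0\in \supp(\mu^0)}$ admits a finite subcover $U_{x^0_1},\ldots,U_{x^0_N}$ with associated times $t^0_1,\ldots,t^0_N$. This directly yields the finite-cover version of the condition, and setting $T^0:=\max_{i} t^0_i$ gives the uniform‑time version: for every $x^0\in\supp(\mu^0)$ some $t^0\in (0,T^0]$ satisfies $\Phi_{t^0}^v(x^0)\in\omega$. The argument for $\mu^1$ is identical after replacing $v$ by $-v$ and using Condition \ref{cond1}(ii).

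The only subtlety I expect is that the times $t^0(x^0)$ need not depend continuously on $x^0$, which is why one cannot simply take a supremum over $\supp(\mu^0)$ from the pointwise statement without first localizing through open neighborhoods; the finite subcover argument bypasses this obstacle entirely. Apart from this, the proof is a straightforward compactness–continuity exercise and does not require any quantitative estimate on $v$ beyond what has already been assumed.
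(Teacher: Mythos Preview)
Your approach is essentially the paper's: continuity of the flow plus compactness of the support, yielding a finite subcover and the uniform time bounds $T_0^*,T_1^*$. The one ingredient you did not anticipate is that Condition~\ref{cond2} also demands a \emph{compactly contained} open set $\omega_0\subset\subset\omega$ into which the flow lands, not merely $\omega$ itself. Your construction, taking $U_{x^0}$ as the full preimage of $\omega$, does not directly produce such an $\omega_0$: the images $\Phi^v_{t^0(x^0_i)}(U_{x^0_i})$ are just $\omega$, and their union need not be relatively compact in $\omega$.

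The fix is exactly what the paper does: for each $x^0$ first pick a ball $B_{r(x^0)}\big(\Phi^v_{t^0(x^0)}(x^0)\big)\subset\subset\omega$ on the target side, then use continuity of $y\mapsto\Phi^v_{t^0(x^0)}(y)$ to pull this ball back to a neighborhood $B_{\hat r(x^0)}(x^0)$. After extracting the finite subcover $\{x^0_1,\dots,x^0_N\}$, one sets
\[
\omega_0:=\bigcup_{i=1}^N B_{r(x^0_i)}\big(\Phi^v_{t^0(x^0_i)}(x^0_i)\big),
\]
which is a finite union of balls whose closures lie in $\omega$, hence $\omega_0\subset\subset\omega$. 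With this minor adjustment your argument and the paper's are the same.
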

\begin{cond}\label{cond2}
There exist two real numbers $T_0^*$, $T_1^*>0$ and a non-empty open set $\omega_0\subset\subset\omega$ such that 
\begin{enumerate}
\item[(i)] For all $x^0\in \supp(\mu^0)$, 
there exists $t^0\in[0, T_0^*]$ such that $\Phi_{t^0}^v(x^0)\in \omega_0.$

\item[(ii)] For all $x^1\in \supp(\mu^1)$, 
there exists $t^1\in[0,T_1^*]$ such that $\Phi_{-t^1}^{v}(x^1)\in \omega_0$.
\end{enumerate}
\end{cond}

\begin{proof}
We use an compactness argument. Let $\mu^0\in\mc{P}_c(\mb{R}^d)$ and assume that Condition  \ref{cond1} holds.
Let  $x^0\in\supp(\mu^0)$.
Using Condition  \ref{cond1},
 there exists $t^0(x^0)>0$ such that $\Phi_{t^0(x^0)}^v(x^0)\in \omega.$
 Choose $r(x^0)>0$ such that $B_{r(x^0)}(\Phi_{t^0(x^0)}^v(x^0))\subset\subset \omega$, that exists since $\omega$ is open. By continuity of the application $x^1\mapsto \Phi_{t^0(x^0)}^v(x^1) $ (see  \cite[Th. 2.1.1]{BP07}), 
 there exists $\hat{r}(x^0)$ such that 
 $$x^1\in B_{\hat{r}(x^0)}(x^0)~~\Rightarrow~~\Phi_{t^0(x^0)}^v(x^1) \in B_{r(x^0)}(\Phi_{t^0(x^0)}^v(x^0)).$$
Since $\mu^0$ is compactly supported,  we can find a set $\{x^0_1,...,x^0_N\}\subset\supp(\mu^0)$ such that 
$$\supp(\mu^0)\subset \bigcup\limits_{i=1}^NB_{\hat{r}(x^0_i)}(x_i^0).$$
Thus the first item of Lemma \ref{lemma cond} is satisfied for 
\vspace*{-1mm}$$T_0^*:=\max\{ t^0(x^0_i)\}\}
~~~\mbox{ and }~~~
\omega_0:=\bigcup\limits_{i=1}^NB_{r(x^0_i)}(\Phi^v_{t^0(x^0_i)}(x^0_i)).\vspace*{-1mm}$$
The proof of the existence of $T_1^*$ is similar.
\end{proof}
We now prove that we can store nearly the whole mass of $\mu_0$ in $\omega$, under Condition \ref{cond2}.

\begin{prop}\label{prop1}
Let $\mu^0\in\mc{P}_c(\mb{R}^d)$ satisfying the first item of Condition \ref{cond2}.
Then there exists $\mathds{1}_{\omega}u\in\mc{U}$
such that 
\vspace*{-1mm}\begin{equation}\label{mu T0}
\supp(\mu(T_0^*))\subset\omega.
\vspace*{-1mm}\end{equation}
\end{prop}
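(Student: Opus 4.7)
The plan is to use the time-independent control $u(x) := -\chi(x)\, v(x)$, where $\chi \colon \mb{R}^d \to [0,1]$ is a smooth cutoff with $\chi \equiv 1$ on $\overline{\omega_0}$ and $\supp(\chi) \subset \omega$; such $\chi$ exists because $\omega_0 \subset\subset \omega$. Since $v$ is Lipschitz and uniformly bounded and $\chi$ is smooth with compact support in $\omega$, the function $\mathds{1}_\omega u$ belongs to $\mc{U}$. The controlled velocity field $w := v + u = (1-\chi)v$ is a non-negative scalar multiple of $v$ that vanishes identically on $\omega_0$, so $\omega_0$ is positively invariant under $\Phi_t^w$. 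Moreover, outside $\supp(\chi)$ the two flows $\Phi_t^w$ and $\Phi_t^v$ coincide, and inside $\supp(\chi)$ the trajectories of $\Phi_t^w$ trace the same oriented curves as those of $\Phi_t^v$, merely with a time-reparametrisation $\Phi_t^w(x^0) = \Phi_{\sigma(t)}^v(x^0)$ for some non-decreasing $\sigma(t) \leq t$ satisfying $\dot\sigma(t) = 1-\chi(\Phi_{\sigma(t)}^v(x^0))$ and $\sigma(0)=0$.

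To verify $\Phi_{T_0^*}^w(x^0) \in \omega$ for every $x^0 \in \supp(\mu^0)$, I would reuse the finite-cover construction from the proof of Lemma \ref{lemma cond} together with continuity of the flow in the initial condition, possibly enlarging $\omega_0$ slightly (keeping it relatively compact in $\omega$), to produce a uniform $\delta > 0$ such that for every $x^0 \in \supp(\mu^0)$ an entire arc $\{\Phi_s^v(x^0) : s \in [t^0(x^0),\,t^0(x^0)+\delta]\}$ lies inside $\omega_0$. By Condition \ref{cond2}(i) the free-flow trajectory enters $\omega_0 \subset \supp(\chi)$ no later than $t^0(x^0) \leq T_0^*$; and since $\Phi_t^w$ and $\Phi_t^v$ agree outside $\supp(\chi)$, the controlled trajectory enters $\supp(\chi)$ at the same instant $\tau(x^0) \leq T_0^*$. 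Once inside $\supp(\chi) \subset \omega$, any attempted exit of the controlled trajectory would force the underlying free-flow curve to exit $\supp(\chi)$ as well; but the uniform arc above keeps that curve in $\omega_0$ on an interval of positive length, so the controlled trajectory is trapped in $\supp(\chi) \subset \omega$ for all $t \geq \tau(x^0)$, in particular at $t=T_0^*$.

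The main obstacle is the usual pitfall of this kind of cutoff trap: because $1-\chi \to 0$ as one approaches $\overline{\omega_0}$, the slowdown in $\supp(\chi) \setminus \omega_0$ may make $\sigma(t)$ tend to $t^0(x^0)$ only asymptotically, so the controlled trajectory need not actually reach $\omega_0$ in finite time. My argument sidesteps this by settling for the weaker conclusion \emph{trajectory in $\supp(\chi)$} rather than \emph{trajectory in $\omega_0$}: the uniform $\delta$ obtained from compactness of $\supp(\mu^0)$ provides exactly what is needed to prevent the trajectory from exiting $\supp(\chi) \subset \omega$ once it has entered.
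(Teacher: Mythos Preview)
Your approach shares the paper's key idea---reparametrise the controlled flow as $\Phi_t^{w}(x^0)=\Phi_{\sigma(t)}^{v}(x^0)$ with $\dot\sigma=1-\chi\bigl(\Phi_{\sigma}^{v}(x^0)\bigr)$---but it uses a \emph{single fixed} cutoff $\chi$, and that is where it breaks.

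The flaw is the claim that ``the controlled trajectory is trapped in $\supp(\chi)$ for all $t\geq\tau(x^0)$''. Your uniform $\delta$ only says that the \emph{free} trajectory lies in $\omega_0$ on $[t^0(x^0),\,t^0(x^0)+\delta]$. But since $\sigma$ never reaches $t^0(x^0)$ (as you yourself note, the scalar ODE $\dot\sigma=1-\chi(\Phi_{\sigma}^{v}(x^0))$ has a Lipschitz right-hand side vanishing at $t^0(x^0)$, so $\sigma(t)<t^0(x^0)$ for all finite $t$), the controlled trajectory only ever visits the arc $\{\Phi_s^{v}(x^0):s<t^0(x^0)\}$, and the portion of the free curve after $t^0(x^0)$ is irrelevant. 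Before time $t^0(x^0)$ nothing prevents the free trajectory from entering and leaving $\supp(\chi)$: it may graze through the annulus $\supp(\chi)\setminus\overline{\omega_0}$, leave $\omega$ entirely, wander outside for a long time, and only later return to hit $\omega_0$. The first grazing creates a positive lag $T_0^*-\sigma(T_0^*)$, and then $\Phi_{\sigma(T_0^*)}^{v}(x^0)$ may sit on the part of the free curve that lies outside $\omega$. This already fails for a single $x^0$, so no enlargement of $\omega_0$ or compactness argument over $\supp(\mu^0)$ can repair it.

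The paper's proof avoids this precisely by replacing the single cutoff with a \emph{sequence} $\theta_k$ whose slowdown region $S_k\setminus\omega_0$ shrinks to $\partial\omega_0$ as $k\to\infty$. A compactness argument on $\{\Phi_s^{v}(x^0):s\in[0,t^*-\beta]\}$ then shows that, for $k$ large enough, the free trajectory stays entirely outside $S_k$ on $[0,t^*-\beta]$; hence controlled and free flows coincide there, and for $s>t^*-\beta$ the controlled flow is confined to a ball $B_{\alpha/2}(\Phi_{t^*}^{v}(x^0))\subset\omega_1\subset\omega$. The dependence on $k$ is essential, not cosmetic: it is exactly what rules out the early grazing that defeats a single cutoff.
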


\begin{proof}
Let $k\in\mb{N}^*$. 
We denote by $\alpha:=d(\omega,\omega_0)$, $\omega_1:=\{x^0\in \mb{R}^d:d(x^0,\omega_0)< \alpha/2\}$
and $S_k:=\{x^0\in \mb{R}^d:d(x^0,\omega_0)< \alpha/2k\}$.
 We define
$\theta_k$ a cutoff function
on $\omega$ of class $\mc{C}^{\infty}$ 
 satisfying
\vspace*{-1mm}\begin{equation}\label{def thetak}
\left\{\begin{array}{l}
0\leqslant \theta_k\leqslant 1,\\
\theta_k=1\mbox{ in }S_k^c,\\
\theta_k=0\mbox{ in }\omega_0.
\end{array}\right.
\vspace*{-1mm}\end{equation}
Define
\vspace*{-1mm}\begin{equation}\label{eq:control prop 35}
u_k:=(\theta_k-1) v.
\vspace*{-1mm}\end{equation} 
We remark that the support of $u_k$ is included in $\omega$.
Let $x^0\in\supp(\mu^0)$.
Define
\vspace*{-2mm}$$
t^*(x^0):=\inf\{t\in\mb{R}^+:\Phi_t^v(x^0)\in\overline\omega_0\}\leqslant T_0^*.\vspace*{-2mm}$$
Consider the flow $y:=\Phi_{t}^{v}(x^0)$ associated to $x^0$ without control, \textit{i.e.}
 the solution to 
\vspace*{-2mm}\begin{equation*}
\left\{\begin{array}{l}
\dot{y}(t)=v(y(t)),\\\noalign{\smallskip}
y(0)=x^0
\end{array}\right.
\vspace*{-2mm}\end{equation*}
and the flow  $z_k:=\Phi_{t}^{u_k+v}(x^0)$ associated to $x^0$ with the control $u_k$ 
given in \eqref{eq:control prop 35}, \textit{i.e.} the solution to
\vspace*{-2mm}\begin{equation}\label{eq:carac y}
\left\{\begin{array}{l}
\dot{z}_k(t)=(v+u_k)(z_k(t))=\theta_k(z_k(t))\times v(z_k(t)),\\\noalign{\smallskip}
z_k(0)=x^0.
\end{array}\right.
\vspace*{-2mm}\end{equation}
We now prove that the range of $z_k$ for $t\geq 0$ is included in the range of $y$ for $t\geq 0$.
Consider the solution $\gamma_k$ to the following system 
\vspace*{-2mm}\begin{equation}\label{eq:carac gamma}
\left\{\begin{array}{l}
\dot{\gamma_k}(t)=\theta_k(y(\gamma_k(t))),~t\geqslant 0,\\\noalign{\smallskip}
\gamma(0)=0.
\end{array}\right.
\vspace*{-2mm}\end{equation}
Since $\theta_k$ and $y$ are Lipschitz, then System \eqref{eq:carac gamma} admits a solution defined for all times.
We remark that $\xi_k:=y\circ \gamma_k$ is solution to System \eqref{eq:carac y}. 
Indeed for all $t\geqslant 0$
\vspace*{-1mm}\begin{equation*}
\left\{\begin{array}{l}
\dot\xi_k(t)=\dot\gamma_k(t)\times\dot y(\gamma_k(t))
=\theta_k(\xi_k(t))\times v(\xi_k(t)),\\\noalign{\smallskip}
\xi_k(0)=y(\gamma_k(0))=y(0).
\end{array}\right.
\vspace*{-1mm}\end{equation*}
By uniqueness of the solution to System \eqref{eq:carac y}, we obtain
\vspace*{-2mm}\begin{equation*}
y(\gamma_k(t))=z_k(t)\mbox{ for all }t\geqslant 0.
\vspace*{-2mm}\end{equation*}
Using the fact that $0\leqslant \theta\leqslant 1$ and the definition of $\gamma_k$, we have
 \vspace*{-1mm}$$\left\{\begin{array}{ll}
 \gamma_k\mbox{ increasing},&\\
 \gamma_k(t)\leqslant t&~\forall t\in[0,t^*(x^0)],\\
 \gamma_k(t)\leqslant t^*(x^0)&~\forall t\geqslant t^*(x^0).
 \end{array}\right. \vspace*{-1mm}$$
We deduce  that, for all $x^0\in\supp(\mu^0)$, 
\vspace*{-1mm}$$\{z_k(t):t\geqslant 0\}
 \subset\{y(s):s\in[0,t^*(x^0)]\}.\vspace*{-1mm}$$
We now prove that for all $k$ large enough, there exists $t\in(0,t^*(x^0))$ such that for all $s>t$, then $\Phi_s^{u_k+v}(x^0)\in \omega_1$.
 Consider  $B:=B_{\alpha/2}(\Phi_{t^*}^v(x^0))\subset \omega_1$.
By continuity, there exists $\beta>0$ such that $\Phi_t^v(x^0)\in B$ for all $t\in(t^*-\beta,t^*)$.
 For all $s\in[0,t^*-\beta]$, we can find $r(s)>0$ such that $B_{r(s)}(\phi_s^v(x^0))\subset \omega_0^c.$
By compactness of $\{\phi_s^v(x^0):s\in [0,t^*-\beta]\}$, there exists a finite subcover $\{B_{r(s_i)}(\phi_{s_i}^v(x^0))\}_{1\leqslant i\leqslant n}$ of $\{\phi_s^v(x^0):s\in [0,t^*-\beta]\}$.
We denote by $R:=\frac12 \min\{r(s_i)\}$.
Let $k$ be such that $\alpha/2k<R$.
Thus 
\vspace*{-1mm}\begin{equation*}
\left\{\begin{array}{ll}
\Phi_s^{v+u_k}(x^0)=\Phi_s^{v}(x^0),&\mbox{ for all }s\leqslant t^*-\beta,\\
\Phi_s^{v+u_k}(x^0)\in B\subset\omega_1,&\mbox{ for all }s> t^*-\beta.
\end{array}\right.
\vspace*{-1mm}\end{equation*}
There exists a ball $B_r(x^0)$, such that  $\Phi_s^{v+u_k}(x^1)\in\omega_1$ for all $x^1\in B_r(x^0)$ and $s>t^*-\beta$.
Thus, by compactness of $\supp(\mu^0)$, for $k$ large enough,  $\Phi_{T^*_0}^{v+u_k}(x^0)\in\omega_1$ for all $x^0\in \supp(\mu^0)$.
\end{proof}

The third step of the proof is to restrict a measure contained in $\omega$ to a measure contained in a hypercube $S\subset \omega$.

\begin{prop}\label{prop2}
Let $\mu^0\in\mc{P}_c(\mb{R}^d)$ satisfying  $\supp(\mu^0)\subset\omega.$
Define $S$ an open hypercube strictly included in $\omega$ and choose $\delta>0$. 
Then there exists $\mathds{1}_{\omega}u\in\mc{U}$
such that the corresponding solution to System \eqref{eq:transport} 
satisfies 
\vspace*{-2mm}$$\supp(\mu(\delta))\subset S.\vspace*{-1mm}$$
\end{prop}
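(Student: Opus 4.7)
The plan is to construct an explicit admissible control $\mathds{1}_\omega u \in \mc{U}$ whose associated flow $\Phi^{v+u}$ compresses $\supp(\mu^0)$ into the hypercube $S$ within time $\delta$. Since $u$ is free to take any Lipschitz bounded value on $\omega$, the uncontrolled drift $v$ can be entirely cancelled wherever we act, so the problem essentially reduces to the construction of a suitable ambient isotopy inside an open connected subset of $\omega$.

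First I would fix two open connected sets $\omega_1 \subset\subset \omega_2 \subset\subset \omega$ containing $\supp(\mu^0) \cup \overline{S}$, together with a smooth cutoff $\chi \in \mc{C}^\infty_c(\omega_2)$ equal to $1$ on a neighborhood of $\overline{\omega_1}$. Then I would build a smooth time-dependent vector field $w: \mb{R}^d \times [0,\delta] \to \mb{R}^d$ supported in $\omega_1$, whose flow $\psi_t$ leaves $\overline{\omega_1}$ invariant and satisfies $\psi_\delta(\supp(\mu^0)) \subset S$. Such a $w$ can be obtained by concatenating two stages: (i) a compactly supported contraction of $\supp(\mu^0)$ into an arbitrarily small ball around a base point $x^0 \in \supp(\mu^0)$, and (ii) a translation of that ball into $S$ along a smooth path inside $\omega_1$ joining $x^0$ to some $x_S \in S$, whose existence is guaranteed by path-connectedness of $\omega_1$.

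Finally I would set $u(x,t) := w(x,t) - \chi(x) v(x)$. This $u$ is Lipschitz in space, measurable in time, uniformly bounded, and supported in $\omega_2 \subset \omega$, so $\mathds{1}_\omega u \in \mc{U}$. On $\omega_1$, where $\chi \equiv 1$, we have $v + u = w$; combined with the invariance of $\overline{\omega_1}$ under $\psi_t$, this shows that the flow $\Phi_t^{v+u}$ agrees with $\psi_t$ on $\supp(\mu^0)$, so that $\Phi_\delta^{v+u}(\supp(\mu^0)) = \psi_\delta(\supp(\mu^0)) \subset S$. By the representation $\mu(\delta) = \Phi_\delta^{v+u} \# \mu^0$, the required inclusion $\supp(\mu(\delta)) \subset S$ follows.

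The hard part will be the construction of the isotopy-generating field $w$, in particular verifying simultaneously the support constraint, the invariance of $\overline{\omega_1}$, and the compression property $\psi_\delta(\supp(\mu^0)) \subset S$. This can be handled abstractly via the isotopy extension theorem applied in the connected open $d$-manifold $\omega_1$, or more concretely by covering $\supp(\mu^0)$ by finitely many small balls in $\omega_1$ and gluing elementary contraction and translation flows through a partition of unity along tubular neighborhoods of carefully chosen paths.
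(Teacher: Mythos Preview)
Your argument is correct, but it follows a genuinely different route from the paper's. The paper does not cancel the drift and build an isotopy by hand; instead it invokes a Fursikov--Imanuvilov--type function $\eta\in\mc{C}^2(\overline\omega)$ with $\eta>0$ in $\omega$, $\eta=0$ on $\partial\omega$, and $0<\kappa_0\le|\nabla\eta|\le\kappa_1$ on $\omega\setminus S$, and takes the \emph{autonomous} control $u_k:=k\nabla\eta$. A Lyapunov estimate on $t\mapsto\eta(\Phi_t^{v+u_k}(x^0))$ then shows that for $k$ large every trajectory starting in $\supp(\mu^0)$ is pushed into $S$ before time $\delta$, while the sign of $n\cdot(v+k\nabla\eta)$ on $\partial\omega$ keeps trajectories inside $\omega$.

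Your approach trades this analytic gadget for differential-topological ones: you kill $v$ with a cutoff and then build a two-stage compactly supported flow (contraction plus path-translation) inside a connected $\omega_1$. The gain is that you avoid importing the existence of $\eta$ from external references and obtain a construction that is arguably more transparent; the cost is that your control is time-dependent and the ``hard part'' you flag---producing a single compactly supported vector field whose flow compresses an arbitrary compact $K\subset\omega_1$ into a small ball without leaving $\omega_1$---requires either the isotopy extension theorem or a careful ball-chain argument, whereas the paper's Lyapunov computation is a few lines once $\eta$ is granted. Both routes yield the same conclusion; the paper's is shorter if one is willing to cite the Fursikov--Imanuvilov lemma, yours is more self-contained.
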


\begin{proof}
From \cite[Lemma 1.1, Chap. 1]{FI96} and  \cite[Lemma 2.68, Chap. 2]{C09}, there exists 
 a function $\eta \in\mathcal{C}^2(\overline{\omega})$  satisfying 
\vspace*{-1mm}$$ \kappa_0\leqslant|\nabla\eta |\leqslant \kappa_1\mathrm{~in~}\omega\backslash S,~~~
  \eta>0\mathrm{~in~}\omega ~~~\mathrm{and}~~~
   \eta=0\mathrm{~on~}\partial\omega,\label{prop eta}\vspace*{-1mm}$$
with $\kappa_0,\kappa_1>0$. We extend $\eta$ by zero outside of $\omega$. 
$S\subset\subset\omega_k$. 
We denote by 
\vspace*{-1mm}\begin{equation*}
u_k:=k\nabla \eta .
\vspace*{-1mm}\end{equation*}
Let $x^0\in \supp(\mu^0)$. 
Consider the flow $z_k(t)=\Phi_t^{v+u_k}(x^0)$ associated to $x^0$,
\textit{i.e.} the solution to system 
\vspace*{-1mm}\begin{equation*}
\left\{\begin{array}{l}
\dot{z}_k(t)=v(z(t))+u_k(z_k(t)),~t\geqslant 0,\\\noalign{\smallskip}
z_k(0)=x^0.
\end{array}\right.
\vspace*{-1mm}\end{equation*}
The properties of $\eta$ imply that 
$n\cdot\nabla\eta<C<0$ on $\partial\omega$, 
where $n$ represents that exterior  normal vector to $\partial\omega$.
We deduce that, for $k$ large enough, $n\cdot(v+k\nabla \eta)<0$ on $\partial\omega$.
Thus $z_k(t)\in\omega$ for all $t\geqslant 0$.

We now prove that there exists $K\in\mb{N}^*$ and $T\in(0,\delta)$ such that for all $k>K$ and $t\in[T,\delta]$, 
$z_k(t)\in S$ for all $x^0\in\supp(\mu^0)$. By contradiction, assume that 
there exists three sequences $\{k_n\}_{n\in\mb{N}^*}\subset \mb{N}^*$, $\{t_n\}_{n\in\mb{N}^*}\subset(0,\delta)$ and $\{x^0_n\}_{n\in\mb{N}^*}\in\supp(\mu^0)$ satisfying 
$k_n \rightarrow \infty$, $t_n\rightarrow \delta$ and
\vspace*{-2mm}\begin{equation}\label{eq:Phi not in omega}
z_{k_n}(x^0_n,t_n)\in S^c.
\vspace*{-2mm}\end{equation}
Consider the function $f_n$ defined  for all $t\in[0,\delta]$ by
\vspace*{-2mm}$$f_n(t):=\eta(z_{k_n}(t)).\vspace*{-2mm}$$
Its time derivative is given by
\vspace*{-2mm}$$\dot f_n(t)
=k_n|\nabla \eta(z_{k_n}(t))|^2+v(z_{k_n}(t))\cdot\nabla \eta(z_{k_n}(t)).\vspace*{-2mm}$$
Then, using \eqref{eq:Phi not in omega} and 
the properties  of $\eta$, it holds
\vspace*{-2mm}$$f_n(t_n)\geqslant (k_n\kappa_0^2-\|v\|_{L^{\infty}(\omega)}\kappa_1)t_n,\vspace*{-2mm}$$
which is in contradiction for $n$ large enough with
\vspace*{-2mm}$$f_{k_n}(t_n)\leqslant\|\eta\|_{\infty}.\vspace*{-2mm}$$
Thus we deduce that, for a $K\in\mb{N}^*$ and a $T\in[0,\delta]$,
 $\Phi_t^{v+u_k}(x^0)\in S$ for all  $x^0\in \supp(\mu^0)$, $t\in(T,\delta)$ and $k>K$.

%
%

\end{proof}

We now have all the tools to prove Theorem \ref{th1}. The idea is the following: we first send $\mu_0$ inside $\omega$ with a control $u_1$, then from $\omega$ to an hypercube $S$ with a control $u_2$. On the other side, we send $\mu_1$ inside $\omega$ backward in time with a control $u_5$, then from $\omega$ to $S$ with a control $u_4$. When both the source and the target are in $S$, we send one to the other with a control $u_3$.

{\it Proof of Theorem \ref{th1}:} Consider $\mu^0,\mu^1$ satisfying Condition  \ref{cond1}. Then, by Lemma \ref{lemma cond}, there exist $T_0^*,T_1^*$ for which  $\mu^0,\mu^1$ satisfy Condition \ref{cond2}. Define $T:=T_0^*+T_1^*+\delta$ with 
$\delta>0$. 

Choose $\varepsilon>0$ and denote by $T_1:=T_0^*$,  $ T_2:=T_0^*+\delta/3$, $T_5:=T_1^*$ and  $ T_4:=T_1^*+\delta/3$.
Using Propositions \ref{prop1} and \ref{prop2}, there exists some controls $u^1,~u^2,~u^4,~u^5\in\mc{U}$  and a square $S\subset \omega$ such that the solutions to
\vspace*{-1mm}\begin{equation}\label{eq:approx step1}
	\left\{
	\begin{array}{ll}
\partial_t\rho_0 +\nabla\cdot((v+\mathds{1}_{\omega}u^1)\rho_0)=0
&\mbox{ in }\mb{R}^d\times[0,T_1],\\\noalign{\smallskip}
\partial_t\rho_0 +\nabla\cdot((v+\mathds{1}_{\omega}u^2)\rho_0)=0
&\mbox{ in }\mb{R}^d\times[T_1,T_2],\\\noalign{\smallskip}
\rho_0(0)=\mu^0&\mbox{ in }\mb{R}^d
	\end{array}
	\right.
\vspace*{-2mm}\end{equation}
and
\vspace*{-1mm}\begin{equation}\label{eq:approx step2}
	\left\{
	\begin{array}{ll}
\partial_t\rho_1 +\nabla\cdot((v+\mathds{1}_{\omega}u^5)\rho_1)=0
&\mbox{ in }\mb{R}^d\times[-T_5,0],\\\noalign{\smallskip}
\partial_t\rho_1 +\nabla\cdot((v+\mathds{1}_{\omega}u^4)\rho_1)=0
&\mbox{ in }\mb{R}^d\times[-T_4,-T_5],\\\noalign{\smallskip}
\rho_1(0)=\mu^1&\mbox{ in }\mb{R}^d,
	\end{array}
	\right.
\vspace*{-1mm}\end{equation}
satisfy $\supp(u_i)\subset\omega$,
$\rho_0(T_2)(S)>1-\varepsilon$ and $\rho_1(-T_4)(S)>1-\varepsilon.$

We now apply Proposition \ref{prop dim=d} to approximately steer $\rho_0(T_2)$ to $\rho_1(-T_4)$ inside $S$: this gives a control $u_3$ 
on the time interval $[0,\frac{\delta}3]$. Thus, concatenating $u_1,u_2,u_3,u_4,u_5$ on the time interval $[0,T]$, we approximately steer $\mu_0$ to $\mu_1$. \hfill\QED

\section{Proof of Theorem \ref{th2}}\label{s-proof2}

In this section, we prove Theorem \ref{th2} about minimal time. To achieve controllability in this setting, one needs to store the mass coming from $\mu^0$ in $\omega$ and to send it out with a rate adapted to approximate $\mu^1$.

Let $T_0$ be the infimum satisfying Condition \eqref{cond temps opt}, and fix $s>0$.
We now prove that System \eqref{eq:transport} is approximately controllable at time $T:=T_0+s$.
Consider $N\in\mb{N}^*$,    $\tau:=T_0/N$,  $\delta<\tau$, $\xi:=\tau-\delta$ and $\tau_i:=i\times \tau$. 
Define
\vspace*{-1mm}\begin{equation*}
\left\{\begin{array}{l}
A_i:=\{x^0\in\supp(\mu^0):t^0(x^0)\in[0,\tau_{i})\},\\
B_i:=\{x^1\in\supp(\mu^1):T_0-t^1(x^1)\in[\tau_i,\tau_{i+1})\},
\end{array}\right.
\vspace*{-1mm}\end{equation*}
where 
$\left\{\begin{array}{l}
t^0(x^0):=\inf\{t\in\mb{R}^+:\Phi_t^v(x^0)\in \omega\},\\ 
t^1(x^1):=\inf\{t\in\mb{R}^+:\Phi_{-t}^v(x^1)\in \omega\}.
\end{array}\right.$\\
We remark that $\mu^0\times\mathds{1}_{A_i}$ represents the mass of $\mu^0$ 
which has entered $\omega$ at time $\tau_i$ 
and $\mu^1\times\mathds{1}_{B_i}$ the mass of $\mu^1$  
which need to exit $\omega$ in the time interval $(\tau_i,\tau_{i+1})$.
Then, by hypothesis of the Theorem, there exists $K$ such that
\vspace*{-1mm}\begin{equation*}
(\Phi^{v+u_K}_{\tau_i}\#(\mu^0\times\mathds{1}_{A^0_i}))(\omega)
\geqslant 1-(\Phi^{v+u_K}_{\tau_i-T}\#(\mu^1\times\mathds{1}_{A^1_i}))(\omega)-\varepsilon.
\vspace*{-1mm}\end{equation*}
The function $u_K$ can be then used to store the mass of $\mu^0$ in $\omega$. The meaning of the previous equation is that the stored mass is sufficient to fill the required mass for $\mu^1$.

We now define the control achieving approximate controllability at time $T$ as follows:
First of all, using the same strategy as in the Proof of Theorem \ref{th1}, 
we can send a part of $\phi_{s-\xi}^{v+u_K}\#( \mu^0\times\mathds{1}_{A_0})$
approximately to $\phi_{-T^*}^{v+u_K}\#( \mu^1\times\mathds{1}_{B_0})$
during the time interval $(s-\xi,s)$.
More precisely, we replace $T_0^*$ and $T_1^*$ by $s-\xi$ and $\xi$ in the proof of Theorem \ref{th1}.
Thus, we send the mass of $\mu^0$ contained in $A_0$
near to the mass of $\mu^1$ contained in $B_0$.
We repeat this process on each time interval $(\tau_i,\tau_{i+1})$ for  $A_i$ to $B_i$.
Thus, the mass of $\mu^0$ is globally sent close to the mass of $\mu^1$ in time $T$.

\section{Example of minimal time problem}
\label{s-example}

In this section, we give explicit controls realizing the approximate minimal time in one simple example. The interest of such example is to show that the minimal time can be realized by non-Lipschitz controls, that are unfeasible.

We study an example on the real line. We consider a constant initial data $\mu_0=\mathds{1}_{[0,1]}$ and a constant uncontrolled vector field $v=1$. The control set is $\omega=[2,3]$. Our first target is the measure $\mu_1=\frac12 \mathds{1}_{[4,6]}$. We now consider the following control strategy:
\vspace*{-2mm}\begin{equation}
u(t,x)=\begin{cases}
0&\hspace*{-3mm}t\in[0\frac43)\cap[\frac73,\frac{13}3],\\
\psi(2+(t-\frac43),\frac73+2(t-\frac43))&t\in[\frac43,\frac53),\\
\psi(2+(t-\frac53),\frac73+2(t-\frac53))&t\in[\frac53,2),\\
\psi(2+(t-2),\frac73+2(t-2))&t\in[2,\frac73),
\end{cases}\label{e-u1}
\vspace*{-2mm}\end{equation}
where $\psi(a,b)$ is defined as follows:
\vspace*{-2mm}\begin{equation}
\psi(a,b)(x)=\begin{cases}
\frac{x-a}{b-a}& x\in[a,b],\\
0 &x\not\in[a,b].
\end{cases}
\vspace*{-2mm}\end{equation}
The choice of $\psi(a,b)$ given above has the following meaning: the vector field $\psi(a,b)$ is linearly increasing  on the interval, thus an initial measure with constant density $k\mathds{1}_{[\alpha_0,\beta_0]}$ with $a\leq \alpha_0\leq \beta_0\leq b$ will be transformed to a measure with constant density, supported in $[\alpha(t),\beta(t)]$, where $\alpha(t)$ is the unique solution of the ODE 
\vspace*{-2mm}$$
\begin{cases}
\dot x=v+u,\\
x(0)=\alpha_0,
\end{cases}\label{e-odemia}
\vspace*{-2mm}$$
and similarly for $\beta(t)$. As a consequence, we can easily describe the solution $\mu(t)$ of \eqref{eq:transport} with control \eqref{e-u1} and initial data $\mu_0$. For simplicity, we only describe the measure evolution and the vector field on the time interval $[1,\frac43]$ in Figure \ref{f-rarefaction}. One can observe that the linearly increasing time-varying control allows to rarefy the mass.

Two remarks are crucial:
\begin{itemize}
\item The vector field $v+\psi(a,b)$ is not Lipschitz, since it is discontinuous. Thus, one needs to regularize such vector field with a Lipschitz mollificator. As a consequence, the final state does not coincide with $\mu_1$, but it can be chosen arbitrarily close to it;
\item The strategy presented here cuts the measure in three slices of mass $\frac13$, and rarefying each of them separately. Its total time is $4+\frac13$. One can apply the same strategy with a larger number $n$ of slices, and rarefying the mass in $[2,2+\frac1n]$ by choosing the control $\psi(2+t,2+\frac1n +2t)$. With this method, one can reduce the total time to $4+\frac1n$, then being approximately close to the minimal time $T_0=4$ given by Theorem \ref{th2}.
\end{itemize}

\vspace*{-5mm}
\begin{figure}[h]\begin{center}
   \hspace*{-5mm} \includegraphics[width=10cm]{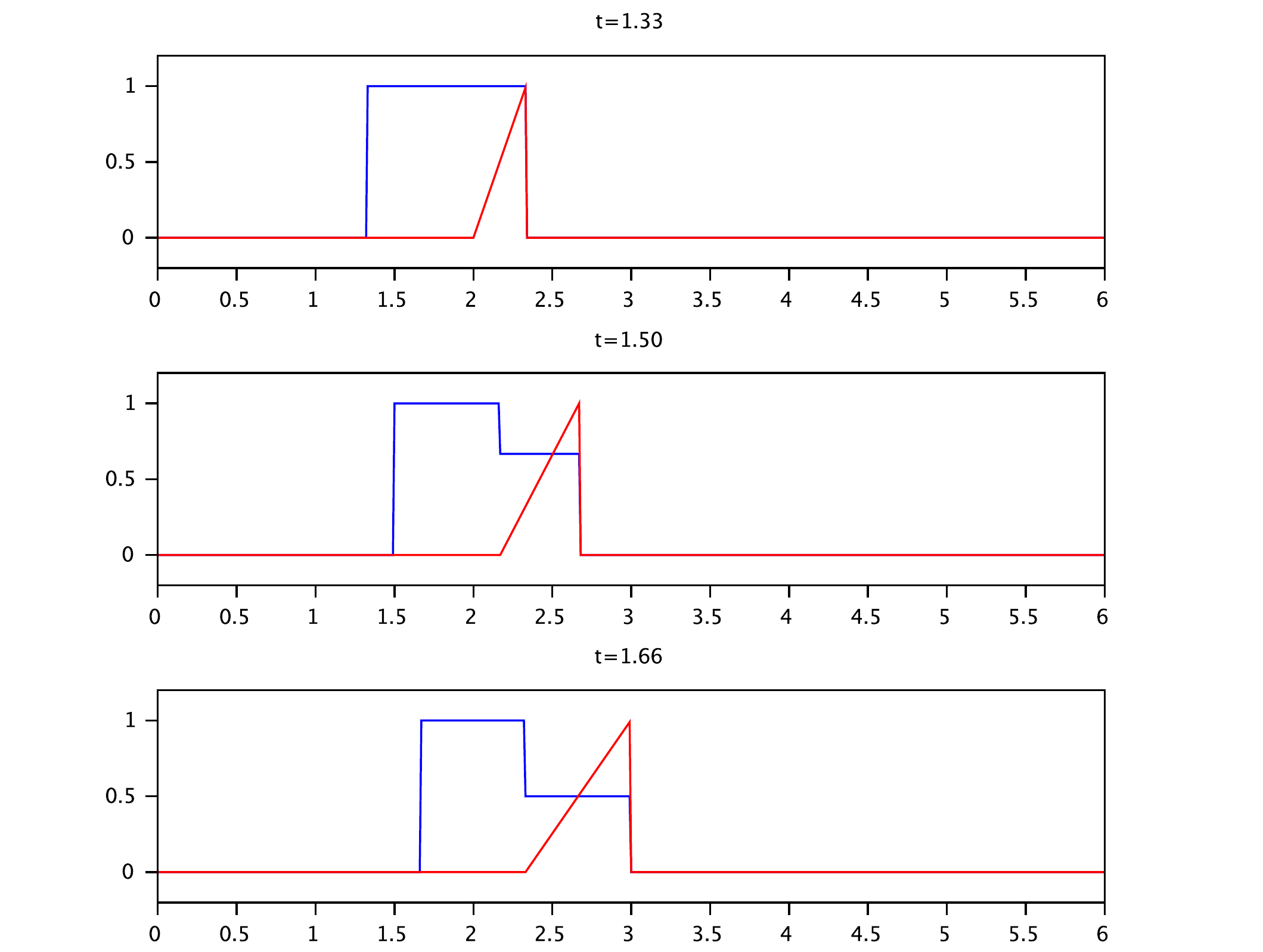}
\caption{Blue: density of the measure. Red: control vector field.}
\vspace*{-1mm}\label{f-rarefaction}
\end{center}\end{figure}

\section{Conclusion}

In this article, we studied the control of a transport equation, where the control is a Lipschitz vector field in a fixed set $\omega$. We proved that approximate controllability can be achieved under reasonable geometric conditions for the uncontrolled systems. We also proved a result of minimal time control from one configuration to another.
Future research directions include the study of more general transport equations, namely when the uncontrolled dynamics presents interaction terms, such as in models for crowds and opinion dynamics.

\bibliographystyle{plain}
\bibliography{biblio.bib}

\begin{thebibliography}{1}

\bibitem{BP07}
A.~Bressan and B.~Piccoli.
\newblock {\em Introduction to the mathematical theory of control}, volume~2 of
  {\em AIMS Series on Applied Mathematics}.
\newblock American Institute of Mathematical Sciences (AIMS), Springfield, MO,
  2007.

\bibitem{CPRT17}
M.~Caponigro, B.~Piccoli, F.~Rossi, and E.~Tr\'{e}lat.
\newblock Mean-field sparse jurdjevic-quinn control.
\newblock {\em Submitted}, 2017.

\bibitem{C09}
J.-M. Coron.
\newblock {\em Control and nonlinearity}, volume 136 of {\em Mathematical
  Surveys and Monographs}.
\newblock American Mathematical Society, Providence, RI, 2007.

\bibitem{CS}
F.~Cucker and S.~Smale.
\newblock Emergent behavior in flocks.
\newblock {\em IEEE Trans. Automat. Control}, 52(5):852--862, 2007.

\bibitem{FI96}
A.~V. Fursikov and O.~Yu. Imanuvilov.
\newblock {\em Controllability of evolution equations}, volume~34 of {\em
  Lecture Notes Series}.
\newblock Seoul National University, Research Institute of Mathematics, Global
  Analysis Research Center, Seoul, 1996.

\bibitem{PR13}
B.~Piccoli and F.~Rossi.
\newblock Transport equation with nonlocal velocity in {W}asserstein spaces:
  convergence of numerical schemes.
\newblock {\em Acta Appl. Math.}, 124:73--105, 2013.

\bibitem{PRT15}
B.~Piccoli, F.~Rossi, and E.~Tr\'elat.
\newblock Control to flocking of the kinetic {C}ucker-{S}male model.
\newblock {\em SIAM J. Math. Anal.}, 47(6):4685--4719, 2015.

\bibitem{V03}
C.~Villani.
\newblock {\em Topics in optimal transportation}, volume~58 of {\em Graduate
  Studies in Mathematics}.
\newblock American Mathematical Society, Providence, RI, 2003.

\end{thebibliography}

\end{document}